\newtheorem{thm}{Theorem}[section]
\newtheorem{prop}[thm]{Proposition}
\newtheorem{defn}[thm]{Definition}
\newtheorem{rem}[thm]{Remark}
\numberwithin{equation}{section}
\newcommand{\Der}{\textrm{Der}}
\begin{document}

\title[Solvable extensions of the quasi-filiform Leibniz]{Solvable extensions of the naturally graded quasi-filiform Leibniz algebras}

\author{Abdurasulov K.K.\textsuperscript{1,2} and Adashev J.Q.\textsuperscript{1,2}}

\address{\textsuperscript{1}Institute of Mathematics, Uzbekistan  Academy of  Sciences, 9 University street, 100174.  abdurasulov0505@mail.ru, adashevjq@mail.ru}
\address{\textsuperscript{2}Chirchiq State Pedagogical Institute of Tashkent region, 104 Amir Temur street, 111700, Tashkent,  Uzbekistan.}

\begin{abstract} The present article is a part of the study of solvable Leibniz algebras with a given nilradical. In this paper solvable Leibniz algebras, whose nilradicals is naturally graded quasi-filiform algebra and the complemented space to the nilradical has one dimension, are described up to isomorphism.
\end{abstract}

\maketitle
\textbf{Mathematics Subject Classification 2020}: 17A32; 17A36; 17A65; 17B30.

\textbf{Keywords}: derivation; Leibniz algebra; solvability and nilpotency; nilradical;  quasi-filiform.





\section{Introduction}

Leibniz algebras are a non-antisymmetric analogue of Lie algebras \cite{Bloh, Loday}, which makes that every Lie algebra is a Leibniz algebra. Since then many analogs of important theorems in Lie theory were found to be true for Leibniz algebras, such as the analogue of Levi's theorem which was proved by Barnes \cite{Bar}.

It is well known from the structure theory of finite-dimensional Lie algebras over a field of characteristic zero that there are three main classes of Lie algebras: semisimple, solvable, and those that are neither semisimple nor solvable \cite{cartan}. By the Levi-Mal'cev theorem, algebras from the third class are represented as a semidirect sum of semisimple and solvable algebras. Recall that the classification of semisimple Lie algebras was completely obtained by Cartan: any finite-dimensional semisimple Lie algebra over a field of characteristic zero can be decomposed into a direct sum of prime ideals. Thus, the problem of classifying solvable algebras, which are the second of the three main classes of Lie algebras, has become very topical. Thanks to the method of G.M. Mubarakzyanov \cite{Mub} finite-dimensional solvable Lie algebras can be reconstructed using their maximal nilpotent ideals and outer derivations of these ideals. Mubarakzyanov's method also extends to solvable Leibniz algebras. The fact that the operator of right multiplication by an element of the complemented subspace to the nilradical of a solvable Leibniz algebra is the outer derivation of the nilradical is a key role in this description. In recent years, significant results have been obtained in the classification of solvable Leibniz algebras.

Using the result of \cite{Mub}, an approach to the study of
solvable Lie algebras in an arbitrary finite-dimension through the use of the nilradical was developed in
\cite{Ancochea1, NdWi, Rubin, WaLiDe}, etc. In particular, L.Garc{\'{\i}}a  \cite{gar} studied solvable Lie algebras with quasi-filiform nilradicals.  In fact, there are solvable Lie algebras constructed using the method explained in \cite{Mub}.

The analogue of Mubarakzjanov's result has been applied to the Leibniz
algebras in \cite{Nulfilrad}, showing the importance of
consideration of the nilradical in the case of Leibniz algebras as
well. Papers \cite{Abdurasulov1, Abdurasulov, Nulfilrad, Abror2, Shab, Shab22, Shab4, Shab5}  are also devoted
to the study of solvable Leibniz algebras by considering the
nilradical.

The aim of this article is to describe solvable Leibniz algebras with naturally graded quasi-filiform Leibniz nilradicals and with the maximal dimension of complementary space of its nilradical. Namely, naturally graded quasi-filiform Leibniz algebras in any finite dimension over $\mathbb{C}$ were studied by Camacho, G\'{o}mez, Gonz\'{a}lez, and Omirov \cite{Camacho2}. They found five such algebras of the first type, where two of them depend on a parameter and eight algebras of the second type with one of them depending on a parameter. The naturally graded quasi-filiform Lie algebras  were classified in \cite{gomez}. Here exist six families, two of them are decomposable, i.e., split into a direct sum of ideals and as well as there exist some special  cases that appear only in low dimensions.

It is  known that in works devoted to the classification of solvable Leibniz algebras generated by their nilradicals, algebras with  certain nilradical have been studied. In this work, algebras that their nilradicals are isomorphic to  quasi-filiform algebras are studied. It should be noted that the previous result was used directly to obtain the solvable algebra, so the computational processes were much simpler than in the previous work.

Throughout the paper vector spaces and algebras are finite-dimensional over the field of the complex numbers. Moreover, in the table of multiplication of any algebra the omitted products are assumed to be zero
and, if it is not noted, we consider non-nilpotent solvable algebras.

\section{Preliminaries}

In this section we recall some basic notions and concepts used throughout the paper.

\begin{defn}
A vector space with a bilinear bracket $(L,[\cdot,\cdot])$
is called a Leibniz algebra if for any $x,y,z\in L$ the so-called
Leibniz identity
$$\big[x,[y,z]\big]=\big[[x,y],z\big]-\big[[x,z],y\big]$$
holds.
\end{defn}


Further, we use the notation
\[ {\mathcal LI}(x, y, z)=[x,[y,z]] - [[x,y],z] + [[x,z],y].\]
It is obvious that Leibniz algebras are determined by the identity
${\mathcal LI}(x, y, z)=0$.

The sets $Ann_r(L)=\{x \in L: [y,x]=0, \ \forall y \in L\}$ and
$Ann_l(L)=\{x \in L: [x,y]=0, \ \forall y \in L\}$
 are called the right and left annihilators of $L$, respectively.
It is observed that $Ann_r(L)$ is a
two-sided ideal of $L$, and for any $x, y \in L$ the elements $[x,x]$ and $[x,y]+
[y,x]$ belong to $Ann_r(L)$.

For a given Leibniz algebra $(L,[\cdot,\cdot])$, the sequences of
two-sided ideals are defined recursively as follows:
\[L^1=L, \ L^{k+1}=[L^k,L],  \ k \geq 1, \qquad \qquad
L^{[1]}=L, \ L^{[s+1]}=[L^{[s]},L^{[s]}], \ s \geq 1.
\]

These are said to be the lower central and the derived series of $L$,
correspondingly.

\begin{defn} A Leibniz algebra $L$ is said to be
nilpotent (respectively, solvable), if there exists $n\in\mathbb
N$ ($m\in\mathbb N$) such that $L^{n}=\{0\}$ ($L^{[m]}=\{0\}$).
\end{defn}

It is easy to see that the sum of two nilpotent ideals is also nilpotent. Therefore, the maximal nilpotent ideal always exists.
The maximal nilpotent ideal of a Leibniz algebra is said to be the nilradical of the algebra.

\begin{defn} A linear map $d \colon L \rightarrow L$ of a Leibniz algebra $(L,[\cdot,\cdot])$ is said to be a
 derivation if for any $x, y \in L$, the following  condition holds:
 \begin{equation*}\label{der} d([x,y])=[d(x),y] + [x, d(y)] \,.\end{equation*}
\end{defn}
The set of all derivations of $L$ is denoted by $\Der(L),$ which is a Lie algebra with respect to the commutator.

For a given element $x$ of a Leibniz algebra $L$, the right
multiplication operator $\mathcal{R}_x \colon L \to L$ defined by $\mathcal{R}_x(y)=[y,x],  y \in
L$ is a derivation. In fact, Leibniz algebras are characterized by this property regarding right multiplication operators. As in the Lie case, these kinds of derivations are said to be inner derivations.

\begin{defn}
Let $d_1, d_2, \dots, d_n$ be derivations of a Leibniz algebra
$L.$ The derivations $d_1, d_2, \dots, d_n$ are said to be linearly
nil-independent, if for $\alpha_1, \alpha_2, \dots ,\alpha_n \in \mathbb{C}$
and a natural number $k$,
$$(\alpha_1d_1 + \alpha_2d_2+ \dots +\alpha_nd_n)^k=0\ \hbox{implies }\ \alpha_1 = \alpha_2= \dots =\alpha_n=0.$$
\end{defn}

Note that in the above definition the power is understood with respect to composition.

Let $L$ be a solvable Leibniz algebra. Then it can be written
in the form $L=N \oplus Q,$ where $N$ is the nilradical and $Q$ is the
complementary subspace.

Let $R$ be a solvable Leibniz algebra with nilradical $N$. We denote by $Q$ the complementary vector space  of the nilradical $N$ of the algebra $R$.

\begin{thm}[\cite{Nulfilrad}\label{nilr}]
Let $L$ be a solvable Leibniz algebra and $N$ be its nilradical. Then
the dimension of $Q$ is not
greater than the maximal number of nil-independent derivations of
$N.$
\end{thm}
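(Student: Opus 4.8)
The plan is to run the Mubarakzyanov-type argument adapted to Leibniz algebras. Write $L=N\oplus Q$ and fix a basis $x_1,\dots,x_s$ of $Q$, so that $s=\dim Q$. First I would observe that since $N$ is a two-sided ideal, each right multiplication satisfies $\mathcal{R}_{x_i}(N)=[N,x_i]\subseteq N$, so the restrictions $d_i:=\mathcal{R}_{x_i}|_N$ are genuine derivations of $N$, being restrictions of the derivations $\mathcal{R}_{x_i}$ of $L$ to an invariant subalgebra. The whole statement then reduces to showing that $d_1,\dots,d_s$ are nil-independent, for in that case the maximal number of nil-independent derivations of $N$ is at least $s=\dim Q$.

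I would prove nil-independence by contradiction. Suppose some nontrivial combination is nilpotent, say $(\sum_i\alpha_i d_i)^k=0$ with the $\alpha_i$ not all zero. Put $x=\sum_i\alpha_i x_i$; by linearity of right multiplication $\sum_i\alpha_i d_i=\mathcal{R}_x|_N$, and since $Q\cap N=0$ we have $0\neq x\notin N$, so $\mathcal{R}_x|_N$ is nilpotent, say $\mathcal{R}_x^{\,p}|_N=0$. Using the standard structural fact that the derived algebra $L^2$ of a solvable Leibniz algebra is nilpotent, hence $L^2\subseteq N$, I get $\mathcal{R}_x(L)=[L,x]\subseteq L^2\subseteq N$; combined with $\mathcal{R}_x^{\,p}(N)=0$ this forces $\mathcal{R}_x$ to be nilpotent on all of $L$.

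The target contradiction is that $M:=N+\CC x$ is a nilpotent ideal properly containing $N$, which is impossible as $N$ is the maximal nilpotent ideal. That $M$ is a two-sided ideal is immediate from $[L,M]+[M,L]\subseteq N+L^2\subseteq N\subseteq M$. For nilpotency I would invoke the Leibniz analogue of Engel's theorem: $M$ is nilpotent once every $\mathcal{R}_m|_M$, $m\in M$, is nilpotent. Writing $m=n+\lambda x$ and noting $[M,M]\subseteq N$, the image of $\mathcal{R}_m|_M$ lies in $N$, so it suffices to show each $\mathcal{R}_m|_N=\mathcal{R}_n|_N+\lambda\,\mathcal{R}_x|_N$ is nilpotent.

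This last point is the main obstacle, since a sum of nilpotent operators need not be nilpotent: the content is that the inner derivation $\mathcal{R}_n|_N$ (nilpotent because $N$ is) and the nilpotent derivation $\mathcal{R}_x|_N$ combine to a nilpotent operator. I would resolve it by passing to the Lie algebra $\mathfrak{g}\subseteq\Der(N)$ generated by $\{\mathcal{R}_y|_N:y\in N\}$ and $D:=\mathcal{R}_x|_N$. The identity $\mathcal{R}_{[y,z]}=[\mathcal{R}_z,\mathcal{R}_y]$ shows that the inner derivations form a subalgebra $\mathcal{N}$ of nilpotent operators, hence a nilpotent Lie algebra by Engel, and that $\operatorname{ad}D$ sends $\mathcal{R}_y|_N\mapsto\mathcal{R}_{D(y)}|_N$, so it acts nilpotently on the ideal $\mathcal{N}\trianglelefteq\mathfrak{g}$ because $D$ is nilpotent; thus $\mathfrak{g}$ is itself nilpotent. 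Decomposing $N$ into generalized weight spaces for $\mathfrak{g}$, every weight must vanish on $\mathcal{N}$ (whose elements are nilpotent operators) and on $D$ (also nilpotent), so the only weight is $0$ and every element of $\mathfrak{g}$, in particular each $\mathcal{R}_m|_N$, is nilpotent. The Leibniz-specific care throughout is in controlling the left-multiplication terms $[x,n]$ and $[x,x]$, which are not symmetric to the right ones but stay inside $N$ via $L^2\subseteq N$ and the relations $[x,x],\,[x,n]+[n,x]\in Ann_r(L)$. Once $M$ is seen to be nilpotent, the contradiction with the maximality of $N$ finishes the proof.
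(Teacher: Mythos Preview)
The paper does not supply its own proof of this theorem; it is quoted from \cite{Nulfilrad} and used as a black box, so there is nothing to compare against here. Your argument is the standard Mubarakzyanov-type proof adapted to Leibniz algebras, which is essentially the content of the cited reference: restrict the right multiplications $\mathcal{R}_{x_i}$ to $N$, assume a nontrivial nilpotent combination, use $L^2\subseteq N$ to enlarge $N$ by the corresponding element $x$, and contradict maximality of $N$ via the Leibniz analogue of Engel's theorem. The ingredients you invoke (that $[L,L]$ is nilpotent for solvable Leibniz $L$, and Engel's theorem for Leibniz algebras) are indeed available in the literature.

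One remark on the exposition: the step ``$\mathcal{N}$ nilpotent, $\operatorname{ad}D|_{\mathcal{N}}$ nilpotent, hence $\mathfrak{g}$ nilpotent'' is correct but not immediate---it already requires Lie's theorem (the adjoint image of $\mathfrak{g}$ is solvable, hence triangularizable, and the spanning nilpotent elements force strict triangularity). In fact you can bypass the intermediate nilpotency claim and the weight-space language altogether: $\mathfrak{g}\subseteq\mathfrak{gl}(N)$ is solvable because $[\mathfrak{g},\mathfrak{g}]\subseteq\mathcal{N}$, so Lie's theorem over $\mathbb{C}$ puts all of $\mathfrak{g}$ in upper-triangular form on $N$; since $D$ and each $\mathcal{R}_y|_N$ ($y\in N$) are nilpotent they are strictly upper triangular, and therefore so is every $\mathcal{R}_m|_N=\mathcal{R}_n|_N+\lambda D$. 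With this streamlining the argument is clean and complete.
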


Thanks to Theorem \ref{nilr} we know that for any $x \in Q$, the operator $\mathcal{R}_{{x |}_{N}}$ is a non-nilpotent  derivation of $N$. Let us fix the following notations:
$$\begin{array}{lll}
R^{i}_{j}(\alpha,\beta,\gamma)& \mbox{---}& i\mbox{th}\   j\mbox{-dimensional solvable Leibniz algebra with nilradical $\mathcal{L}(\alpha,\beta,\gamma)$.} \\
H^{i}_{j}(\alpha,\beta,\gamma)& \mbox{---}& i\mbox{th}\ j\mbox{-dimensional solvable Leibniz algebra with nilradical $\mathcal{G}(\alpha,\beta,\gamma).$}\\
F^{i}_{j}(*)& \mbox{---}& i\mbox{th}\ j\mbox{-dimensional solvable Leibniz algebra with 2-filiform nilradical.}
\end{array}$$

Below we define the notion of a quasi-filiform Leibniz algebra.
\begin{defn} A Leibniz algebra $L$ is called quasi-filiform if $L^{n-2}\neq\{0\}$ and $L^{n-1}=\{0\}$,
where $n=\dim L.$
\end{defn}

Given an $n$-dimensional nilpotent Leibniz algebra $L$ such that $L^{s-1}\neq\{0\}$ and $L^{s}=\{0\}$, put $L_i=L^i/L^{i+1},\ 1\leq i\leq s-1$, and $gr (L) =L_1 \oplus L_2 \oplus \cdots \oplus L_{s-1}$. Using $[L^i,L^j]\subseteq L^{i+j}$, it is easy to establish that $[L_i,L_j]\subseteq L_{i+j}$. So, we obtain the graded algebra $gr (L)$. If $gr (L)$ and $L$ are isomorphic, then we say that $L$ is \emph{naturally graded}.

Let $x$ be a nilpotent element of the set $L\setminus L^2.$ For the nilpotent operator of right multiplication $\mathcal{R}_x,$
we define a decreasing sequence $C(x)=(n_1, n_2, \dots, n_k)$, where $n=n_1+n_2+\dots+n_k,$  which consists of the dimensions of Jordan
blocks of the operator $\mathcal{R}_x.$ In the set of such sequences we consider the lexicographic order, that is,
$C(x)=(n_1, n_2, \dots, n_k)\leq C(y)=(m_1, m_2, \dots, m_t)$ iff there exists $i\in \mathbb N$ such that $n_j=m_j$ for any
$j < i$ and $n_i < m_i.$
\begin{defn} The sequence $C(L)=\mathop {\max
}\limits_{x \in L\setminus L^2}C(x)$ is called a characteristic sequence of the algebra $L$.
\end{defn}

Let $L$ be an $n$-dimensional naturally graded quasi-filiform non-Lie Leibniz algebra which has the characteristic sequence $(n-2, 1, 1)$ or $(n-2, 2).$ The first case (case 2-filiform) has been studied in \cite{Camacho3} and the second in \cite{Camacho2}.

\begin{thm} An arbitrary $n$-dimensional naturally graded non-split $2$-filiform Leibniz algebra $(n\geq 6)$ is isomorphic to one of the following non-isomorphic algebras:

\[\mu_1:\ [e_i,e_1]=e_{i+1}, \ 1\leq i\leq n-3,\ [e_1, e_{n-1}] =e_{n};\]
\[\mu_2:\ [e_i,e_1]=e_{i+1}, \ 1\leq i\leq n-3,\ [e_1,e_{n-1}]=e_{2}+e_{n}, \ [e_i,e_{n-1}]=e_{i+1}, \ 2\leq i\leq n-3,\]
where $\{e_1,e_2,\dots,e_{n}\}$ is a basis
of the algebra.
\end{thm}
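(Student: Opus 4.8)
The plan is to combine the natural gradation with the characteristic sequence $(n-2,1,1)$ to fix the shape of the graded components, and then to reduce the remaining structure constants by means of the Leibniz identity and graded basis changes. Since $L$ is naturally graded, write $L=\bigoplus_{i=1}^{n-2}L_i$ with $[L_i,L_j]\subseteq L_{i+j}$; quasi-filiformity ($L^{n-2}\neq\{0\}$, $L^{n-1}=\{0\}$) forces exactly $n-2$ nonzero layers, each of dimension at least $1$, whose dimensions sum to $n$. Hence the total excess over one-per-layer is $2$, so either a single layer is $3$-dimensional or two layers are $2$-dimensional. First I would show that $\dim L_1=\dim L/L^2=2$ and that the second $2$-dimensional layer is $L_2$, so that the gradation type is $(2,2,1,\dots,1)$. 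Ruling out a $3$-dimensional $L_1$, and the possibility of the extra dimension appearing in some $L_k$ with $k>2$, is the delicate point: it uses the Jordan structure of $\mathcal{R}_{e_1}$ for a characteristic element $e_1$ together with the non-split hypothesis. I expect this gradation-type analysis to be the main obstacle, since it is heavier than in the Lie case: the bracket is not antisymmetric, so $[x,y]$ and $[y,x]$ must be tracked separately and the relations $[x,x],\,[x,y]+[y,x]\in Ann_r(L)$ exploited throughout.

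Once the gradation type $(2,2,1,\dots,1)$ is established, I would fix an adapted basis. Choose $e_1\in L_1$ whose right multiplication $\mathcal{R}_{e_1}$ realizes the Jordan block of size $n-2$, set $e_{i+1}=[e_i,e_1]$ for $1\leq i\leq n-3$ (the long chain across $L_1,L_2,\dots,L_{n-2}$), complete $L_1$ by a second generator $e_{n-1}$, and take $e_n$ to be the remaining vector of $L_2$. The grading forces every product into the correct layer, so the only brackets not yet determined are those involving $e_{n-1}$ and $e_n$, namely $[e_i,e_{n-1}]$, $[e_{n-1},e_j]$ and the products with $e_n$, which I would record with undetermined coefficients compatible with the degrees. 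Nilpotency of $\mathcal{R}_{e_{n-1}}$ together with the annihilator relations already removes many of these terms.

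The core step is to impose ${\mathcal LI}(x,y,z)=0$ on triples of basis vectors. Evaluating ${\mathcal LI}(e_i,e_1,e_1)$, ${\mathcal LI}(e_1,e_{n-1},e_1)$ and ${\mathcal LI}(e_i,e_{n-1},e_1)$ yields recurrences among the unknown coefficients that propagate $[e_i,e_{n-1}]$ along the chain; these force a dichotomy according to whether $\mathcal{R}_{e_{n-1}}$ acts trivially on the chain, giving $[e_i,e_{n-1}]=0$ for $i\geq 2$, or acts as a shift $[e_i,e_{n-1}]=e_{i+1}$. In each branch I would clear the surviving parameters by a graded change of basis (rescaling $e_{n-1},e_n$ and adding lower-degree corrections) and invoke the non-split hypothesis to guarantee that the coupling bracket $[e_1,e_{n-1}]$ is nonzero, since otherwise $\langle e_{n-1},e_n\rangle$ would split off as a direct summand. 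This produces $[e_1,e_{n-1}]=e_n$ in the first branch and $[e_1,e_{n-1}]=e_2+e_n$ in the second, that is, exactly $\mu_1$ and $\mu_2$.

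Finally, to prove $\mu_1\not\cong\mu_2$ I would use only isomorphism-invariant data. Both algebras share the same $L^2$ and the same left annihilator $Ann_l(L)$, and in each the image of $Ann_l(L)$ in $L/L^2$ is the one-dimensional line spanned by the class of $e_{n-1}$; choosing any representative $f$ of this line, the operator $\mathcal{R}_f$ is well defined up to terms that do not affect its rank. A direct check gives $\mathrm{rank}\,\mathcal{R}_f=1$ for $\mu_1$ but $\mathrm{rank}\,\mathcal{R}_f=n-3$ for $\mu_2$; since $n\geq 6$ forces $n-3\geq 3$, this invariant separates the two algebras and completes the classification.
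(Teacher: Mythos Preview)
The paper does not prove this theorem; it is quoted without proof from \cite{Camacho3} as a preliminary result, so there is no in-paper argument to compare your proposal against. Your outline nonetheless follows the standard route used in that reference: determine the gradation type from the characteristic sequence, parametrize the remaining brackets compatible with the grading, reduce via the Leibniz identities ${\mathcal LI}(e_i,e_1,e_1)$, ${\mathcal LI}(e_i,e_{n-1},e_1)$, etc., normalize by graded basis changes, and finally separate the survivors by an invariant.

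The outline is sound and your non-isomorphism invariant is correct. In both $\mu_1$ and $\mu_2$ one checks $Ann_l(L)=\langle e_{n-2},e_{n-1},e_n\rangle$ and $L^2=\langle e_2,\dots,e_{n-2},e_n\rangle\subseteq Ann_r(L)$, so for any lift $f$ of the line $Ann_l(L)\bmod L^2$ the operator $\mathcal{R}_f$ is independent of the lift and has rank $1$ in $\mu_1$ versus $n-3\geq 3$ in $\mu_2$. The one step that is genuinely incomplete rather than merely compressed is the gradation-type analysis you flag as ``delicate''. The characteristic sequence $(n-2,1,1)$ does not by itself force the type $(2,2,1,\dots,1)$: a priori one or both of the $1\times 1$ Jordan blocks of $\mathcal{R}_{e_1}$ could sit in $L_1$ (giving type $(3,1,\dots,1)$) or in some $L_k$ with $k>2$. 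Ruling these out is a short but non-automatic case analysis that combines the non-split hypothesis with the fact that $L_1$ generates $L$ and that each $L_k$ for $k\geq 2$ is reached from $L_1$; this is exactly where \cite{Camacho3} spends its effort, and if you write the proof out in full you should expect that step to carry most of the weight.
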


Thanks to \cite{Abdurasulov}, we already have the classification of solvable Leibniz algebras whose nilradical is a
2-filiform Leibniz algebra.

\begin{thm} Let $R$ be a solvable Leibniz algebra whose nilradical is a $n$-dimensional naturally graded non-split $2$-filiform Leibniz algebra. Then $R$ is isomorphic to one of the following pairwise non-isomorphic algebras:
\[F_{n+1}(\mu_1), \quad F_{n+1}^1(\mu_2), \quad F_{n+1}^2(\mu_2), \quad F_{n+1}^3(\mu_2), \quad F_{n+1}^4(\mu_2).\]
\end{thm}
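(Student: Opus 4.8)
The plan is to apply the Mubarakzyanov-type machinery recorded in Theorem \ref{nilr}. The nilradical $N$ is one of the two algebras $\mu_1,\mu_2$ from the preceding theorem, and since the algebras in the conclusion have dimension $n+1$ while $N$ has dimension $n$, the complementary subspace satisfies $\dim Q=1$. The first task is therefore to compute $\Der(N)$ for each nilradical: writing a generic linear operator in the basis $\{e_1,\dots,e_n\}$ and imposing the derivation condition on the defining products reduces to a linear system whose solution exhibits the outer (non-inner) derivations, and in particular the non-nilpotent ones; by Theorem \ref{nilr} and the remark following it these are exactly the candidates for $d:=\mathcal{R}_x|_N$, where $x$ spans $Q$.

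Next I normalize $d$. The operator $d$ is determined only up to the action of $\mathrm{Aut}(N)$, up to rescaling $x$, and up to adding an inner derivation $\mathcal{R}_{n_0}$, which corresponds to the harmless replacement $x\mapsto x+n_0$ with $n_0\in N$. Using this freedom I put $d$ into a diagonal canonical form. Because $N$ is naturally graded and two-generated, the eigenvalues of $d$ on all of $N$ are dictated by its eigenvalues on the two generators: propagating through $[e_i,e_1]=e_{i+1}$ and the remaining relations determines $d(e_i)$ for every $i$, and hence fixes the products $[e_i,x]=d(e_i)$ up to the normalization constants.

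The third task is to determine the remaining products $[x,e_i]$ and $[x,x]$. Since left multiplication by $x$ need not be a derivation, I treat the $[x,e_i]$ as unknown elements of $R$ constrained by the Leibniz identity $\mathcal{LI}(u,v,w)=0$ evaluated on all triples containing $x$, together with the general facts that $[u,u]$ and $[u,v]+[v,u]$ lie in $Ann_r(R)$ and that $[x,x]\in N$. These relations force most of the $[x,e_i]$ to vanish or to be multiples of fixed basis vectors, pin down $[x,x]$, and simultaneously impose arithmetic constraints on the eigenvalues found above, collapsing the continuous parameters to finitely many normal forms. A concluding change of basis inside $N$ (rescaling the $e_i$ and $x$) removes the residual scalars and yields the algebra $F_{n+1}(\mu_1)$ in the $\mu_1$ case and the four algebras $F^1_{n+1}(\mu_2),\dots,F^4_{n+1}(\mu_2)$ in the $\mu_2$ case.

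Finally I establish pairwise non-isomorphism. The isomorphism type of the nilradical already separates the single $\mu_1$-extension from the $\mu_2$-family, since an isomorphism of solvable algebras carries nilradical to nilradical. Within the $\mu_2$-family the four algebras are distinguished by invariants such as the Jordan type of $\mathcal{R}_x$, the dimension of $Ann_r(R)$, or the specific nonzero extension constants that survive normalization. I expect the $\mu_2$ nilradical to be the main obstacle: its multiplication table is richer, so the Leibniz identity breaks into several branches, and careful bookkeeping is needed both to see that exactly four normal forms survive and to certify that no two of them are related by an automorphism.
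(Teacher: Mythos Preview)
First, note that the paper does not actually prove this theorem: it is quoted from \cite{Abdurasulov} as a known preliminary result, so there is no in-paper argument to compare against. Judging your plan on its own merits, there are two genuine gaps.

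The first is your assertion that, after normalization, $d=\mathcal{R}_x|_N$ can be brought to diagonal form with eigenvalues propagated from the two generators, so that ``the continuous parameters collapse to finitely many normal forms.'' This is false for $\mu_1$. Look at the table for $F_{n+1}(\mu_1)$ in the Appendix: there $d$ acts nilpotently on $\mathrm{span}(e_1,\dots,e_{n-2})$ via $[e_i,x]=\sum_{j>i}a_{j-i+1}e_j$, while $e_{n-1},e_n$ are eigenvectors with eigenvalue $1$. The strictly upper-triangular parameters $a_2,\dots,a_{n-3}$ and the constant $\delta_{n-2}$ in $[x,x]$ survive all admissible changes of basis, so $F_{n+1}(\mu_1)$ is a continuous family, not a single algebra. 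Likewise $F^3_{n+1}(\mu_2)$ and $F^4_{n+1}(\mu_2)$ carry free parameters $\gamma$ and $(\beta,\gamma)$. Your diagonalization step and your ``finitely many normal forms'' conclusion therefore both fail; the correct endpoint is a list of parametric families, and your non-isomorphism paragraph would have to address parameters within a family rather than merely distinguish the five labels.

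The second gap is the inference $\dim Q=1$. You obtain this by reading the dimensions of the algebras in the conclusion, but that is part of what is to be proved. The hypothesis places no bound on $\dim R$, so you must first show from the structure of $\Der(\mu_1)$ and $\Der(\mu_2)$ that neither nilradical admits two nil-independent derivations, i.e.\ that Theorem~\ref{nilr} forces $\dim Q\le 1$; only then does your one-dimensional analysis cover all cases.
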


\begin{defn}
A quasi-filiform Leibniz algebra $L$ is called algebra of
the type I (respectively, type II), if there exists a basic element $e_1\in L\backslash
L^2$ such that the operator $\mathcal{R}_{e_1}$ has the form $\left(\begin{array}{ll}
J_{n-2}&0\\
0&J_{2}
\end{array}\right)$ (respectively, $\left(\begin{array}{ll}
J_{2}&0\\
0&J_{n-2}
\end{array}\right)$).
\end{defn}

In the following theorem we give the classification of naturally graded quasi-filiform Leibniz algebras given in \cite{Camacho2}.

\begin{thm} \label{quasi1} An arbitrary $n$-dimensional naturally graded quasi-filiform Leibniz algebra of type I is isomorphic to one of the following pairwise non-isomorphic algebras of the families:
{\small
\begin{longtable}{l|l}
 \hline
$\mathcal{L}^{1,\beta}_n$ & $ [e_i,e_1]=e_{i+1},\ [e_{n-1},e_1]=e_{n}, \ [e_1,e_{n-1}]=\beta e_{n},\ 1\leq i \leq n-3, \ \beta\in \mathbb{C},$\\
 \hline
$\mathcal{L}^{2,\beta}_n$ & $[e_i,e_1]=e_{i+1},\ [e_{n-1},e_1]=e_{n},\ [e_1,e_{n-1}]=\beta e_{n},\  [e_{n-1},e_{n-1}]=e_n, \ 1\leq i \leq n-3,\ \beta\in \{0,1\},$\\
\hline
$\mathcal{L}^{3,\beta}_n$ & $[e_i,e_1]=e_{i+1},\ [e_{n-1},e_1]=e_{n}+e_2,\ [e_1,e_{n-1}]=\beta e_{n},\ 1\leq i \leq n-3, \ \beta\in \{-1,0,1\},$\\
\hline
$\mathcal{L}^{4,\gamma}_n$ & $[e_i,e_1]=e_{i+1},\  [e_{n-1},e_1]=e_{n}+e_2,\ [e_{n-1},e_{n-1}]=\gamma e_n,\ 1\leq i \leq n-3,\
\gamma\neq 0,$ \\
\hline
$\mathcal{L}^{5,\beta,\gamma}_n$ & $[e_i,e_1]=e_{i+1},\ [e_{n-1},e_1]=e_{n}+e_2,\ [e_1,e_{n-1}]=\beta e_{n},\ [e_{n-1},e_{n-1}]=\gamma e_n,\ 1\leq i \leq n-3,$ \\
&$(\beta,\gamma)=(1,1)\ \text{or}\ (2,4),$\\
\hline
\end{longtable}} where $\{e_1,e_2,\dots,e_{n}\}$ is a basis of the algebra.
\end{thm}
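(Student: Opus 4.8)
The plan is to exploit the natural gradation together with the type I condition to fix an adapted basis, read off the general graded multiplication, cut it down with the Leibniz identity, and finally reduce the surviving parameters by gradation-preserving changes of basis. First I would analyze the shape of the gradation. Since $L$ is quasi-filiform we have $L^{n-2}\neq\{0\}$ and $L^{n-1}=\{0\}$, so $gr(L)=L_1\oplus\cdots\oplus L_{n-2}$ has length $n-2$ and $\dim L_1$ equals the number of generators. Because the characteristic sequence is $(n-2,2)$, the operator $\mathcal{R}_{e_1}$ decomposes $L$ into two cyclic chains of lengths $n-2$ and $2$; tracking the degrees of the chain tops forces $\dim L_1=\dim L_2=2$ and $\dim L_i=1$ for $3\le i\le n-2$ (a count giving $(n-2)+2=n$). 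I would then fix a generator $e_1$ realizing the long Jordan block and set $e_{i+1}=[e_i,e_1]$ for $1\le i\le n-3$, producing the principal chain $e_1,\dots,e_{n-2}$ with $e_i\in L_i$; a second generator $e_{n-1}\in L_1$ and a vector $e_n$ spanning the remaining line of $L_2$ complete the basis, with $\deg e_{n-1}=1$ and $\deg e_n=2$.

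Next I would write the most general multiplication respecting the gradation: each product $[e_a,e_b]$ lies in the component of degree $\deg e_a+\deg e_b$ and vanishes when this exceeds $n-2$. This leaves only a handful of undetermined constants, namely the coefficients of $[e_1,e_{n-1}]$, $[e_{n-1},e_1]$ and $[e_{n-1},e_{n-1}]$ inside $L_2=\langle e_2,e_n\rangle$, together with the propagated products $[e_i,e_{n-1}]$. I would then impose $\mathcal{LI}(e_a,e_b,e_c)=0$ on the generating triples, chiefly those of the form $\mathcal{LI}(e_i,e_1,e_{n-1})$ and $\mathcal{LI}(e_{n-1},e_1,e_{n-1})$, to obtain recursions expressing $[e_i,e_{n-1}]$ in terms of the $L_2$-coefficients and to force algebraic relations among those coefficients (for instance a relation of the form $\gamma=\beta^2$ is what eventually produces the pair $(1,1),(2,4)$). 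The quasi-filiform requirement that the chain genuinely reaches degree $n-2$ discards the degenerate solutions.

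Finally I would normalize. The admissible automorphisms are the rescalings $e_1\mapsto\lambda e_1$, $e_{n-1}\mapsto\mu e_{n-1}+\nu e_1$ and the shears acting within each homogeneous $L_i$; applying these brings the leading chain coefficient to $1$ and reduces the $L_2$-parameters to the listed normal forms, collapsing the continuous data to the discrete sets $\beta\in\{0,1\}$, $\beta\in\{-1,0,1\}$ and $(\beta,\gamma)\in\{(1,1),(2,4)\}$. To close the argument I would prove the five families pairwise non-isomorphic by exhibiting invariants stable under every gradation-preserving isomorphism, such as whether $e_2$ occurs in $[e_{n-1},e_1]$, whether $[e_{n-1},e_{n-1}]\neq\{0\}$, and the scaling-invariant ratios among the $L_2$-coefficients that detect $\beta$ and $\gamma$.

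I expect the principal obstacle to be the middle stage: organizing the Leibniz relations uniformly across the whole chain so as to solve for all structure constants simultaneously, and then disentangling which residual parameters are genuine invariants from those removable by the restricted automorphism group. Reducing the a priori continuous $\beta,\gamma$ to the specific discrete values, and verifying that no further identification between the families is possible, is the delicate combinatorial bookkeeping that carries the real weight of the classification.
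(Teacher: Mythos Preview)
The paper does not prove this theorem: it is stated in the preliminaries as a classification result taken from Camacho, G\'{o}mez, Gonz\'{a}lez and Omirov \cite{Camacho2} and is used without argument, so there is no proof here against which to compare your sketch.

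As an outline of how such a classification is carried out, your plan is the standard one and is correct in its architecture: fix a basis adapted to the type~I Jordan form of $\mathcal{R}_{e_1}$, read off the homogeneous components from the characteristic sequence, write the most general graded multiplication, impose $\mathcal{LI}=0$ on generating triples, and normalize by gradation-preserving automorphisms. Your dimension count $\dim L_1=\dim L_2=2$, $\dim L_i=1$ for $3\le i\le n-2$ and your degree assignments are right, and the separating invariants you name (whether $e_2$ appears in $[e_{n-1},e_1]$, whether $[e_{n-1},e_{n-1}]\neq 0$, the scaling class of the $L_2$-coefficients) are exactly the ones that distinguish the five families. One small imprecision: a relation of the shape $\gamma=\beta^2$ by itself gives a curve, not two points; it is the residual rescaling freedom after one has already normalized the $e_2$-coefficient in $[e_{n-1},e_1]$ to $1$ that cuts this down to the discrete pair $(1,1),(2,4)$. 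You already identify that reduction as the delicate step, so the plan stands.
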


\begin{thm} \label{quasi2} An arbitrary $n$-dimensional naturally graded quasi-filiform Leibniz algebra of second II type is isomorphic to one of the following pairwise non-isomorphic algebras of the families:
{\small
\begin{longtable}{l|l}
 \hline
$\mathcal{L}^{1}_n$& $[e_1,e_1]=e_{2},\ [e_i,e_1]=e_{i+1},\  [e_1,e_i]=-e_{i+1},\  3\leq i \leq n-1,$\\
 \hline
$\mathcal{L}^{2}_n$& $[e_1,e_1]=e_{2},\ [e_i,e_1]=e_{i+1},\ 3\leq i \leq n-1,\ [e_{1},e_3]=e_{2}-e_4,\ [e_1,e_i]=-e_{i+1},\ 4\leq i \leq n-1,$\\
\hline
$\mathcal{L}^{3}_n$&$[e_1,e_1]=e_{2},\ [e_{3},e_3]= e_2,\ [e_i,e_1]=e_{i+1},\ [e_1,e_i]=-e_{i+1},\ 3\leq i \leq n-1,$\\
\hline
$\mathcal{L}^{4}_n$& $[e_1,e_1]=e_{2},\ [e_i,e_1]=e_{i+1},\ 3\leq i \leq n-1,\ [e_{1},e_3]=2e_{2}-e_4,\ [e_1,e_i]=-e_{i+1},\ 4\leq i \leq n-1,$\\
&$[e_{3},e_3]= e_2,$ \\
\hline
$\mathcal{L}^{5}_n,$ & $[e_1,e_1]=e_{2},\ [e_i,e_1]=e_{i+1},\ [e_1,e_i]=-e_{i+1},\ [e_i,e_{n+2-i}]=(-1)^{i}e_{n},\ 3\leq i \leq n-1,$\\
$n$ odd & \\
\hline
$\mathcal{L}^{6,\beta}_n $ & $ [e_1,e_1]=e_{2},\ [e_i,e_1]=e_{i+1},\ 3\leq i \leq n-1,\ [e_{1},e_3]=\beta e_2-e_{4},\ \beta\in\{1,2\},$\\
$n$ odd & $[e_1,e_i]=-e_{i+1},\ 4\leq i \leq n-1,\ [e_i,e_{n+2-i}]=(-1)^{i} e_{n},\ 3\leq i \leq n-1,$\\
\hline
$\mathcal{L}^{7,\gamma}_n$& $[e_1,e_1]=e_{2},\ [e_i,e_1]=e_{i+1},\ [e_1,e_i]=-e_{i+1},\ [e_i,e_{n+2-i}]=(-1)^{i} e_{n},\ 3\leq i \leq n-1,$\\
$n$ odd & $[e_{3},e_3]=\gamma e_2,\ \gamma\neq0,$\\
\hline
$\mathcal{L}^{8,\beta,\gamma}_n$& $[e_1,e_1]=e_{2},\ [e_i,e_1]=e_{i+1},\ 3\leq i \leq n-1,\ [e_{1},e_3]=\beta e_2-e_{4},\ [e_1,e_i]=-e_{i+1},\ 4\leq i \leq n-1,$\\
$n$ odd & $[e_{3},e_3]=\gamma e_2,\ [e_i,e_{n+2-i}]=(-1)^{i}e_{n},\ 3\leq i \leq n-1,\ (\beta,\gamma)=(-2,1),(2,1)\ or\ (4,2),$\\
\hline
\end{longtable}}
where $\{e_1,e_2,\dots,e_{n}\}$ is a basis of the algebra.
\end{thm}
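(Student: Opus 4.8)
The plan is to reconstruct the algebra from its natural grading together with the Jordan structure imposed by the type II condition, and then to cut the resulting family of structure constants down to the stated list using the Leibniz identity and a grading-preserving change of basis. Since $L$ is quasi-filiform, $L^{n-2}\neq\{0\}$ and $L^{n-1}=\{0\}$, so $gr(L)=L_1\oplus\cdots\oplus L_{n-2}$. The type II hypothesis provides a generator $e_1\in L\setminus L^2$ with $\mathcal{R}_{e_1}=J_2\oplus J_{n-2}$; the two Jordan chains, of lengths $2$ and $n-2$, distribute across the homogeneous components so that $L_1$ and $L_2$ are two-dimensional and every higher component is one-dimensional. Concretely I would fix an adapted basis $\{e_1,\dots,e_n\}$ in which the $J_2$ block acts on $\langle e_1,e_2\rangle$ through $[e_1,e_1]=e_2$, $[e_2,e_1]=0$, and the $J_{n-2}$ block acts on $\langle e_3,\dots,e_n\rangle$ through $[e_i,e_1]=e_{i+1}$ for $3\le i\le n-1$ and $[e_n,e_1]=0$. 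This pins down the spine of the multiplication table and gives $L_1=\langle e_1,e_3\rangle$, $L_2=\langle e_2,e_4\rangle$.

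Next I would recover the remaining brackets component by component, using $[L_i,L_j]\subseteq L_{i+j}$ and the fact that $[x,y]+[y,x]\in Ann_r(L)$. For $4\le i\le n-1$ the target component $L_{i-1}$ is one-dimensional, which forces $[e_1,e_i]=-e_{i+1}$, the common backbone of all the families; for $i=3$ the product $[e_1,e_3]$ lies in the two-dimensional $L_2=\langle e_2,e_4\rangle$ and may carry an extra multiple of $e_2$, producing the parameter $\beta$. The remaining freedom is concentrated in the self-product $[e_3,e_3]\in L_2$, contributing $\gamma$, and in the antidiagonal products $[e_i,e_{n+2-i}]$, all of which land in the top line $L_{n-2}=\langle e_n\rangle$.

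The core of the argument is to impose $\mathcal{LI}(e_i,e_j,e_k)=0$ on every basis triple. This turns the Leibniz condition into a finite polynomial system in $\beta$, $\gamma$, and the antidiagonal coefficients; solving it fixes the latter to the alternating pattern $[e_i,e_{n+2-i}]=(-1)^i e_n$ and couples $\beta$ with $\gamma$. I would then normalize the surviving solutions by the automorphisms that respect both the grading and the fixed Jordan form, namely the rescalings and shears compatible with $\mathcal{R}_{e_1}$, reducing the continuous parameters to the discrete representatives in the statement ($\beta\in\{1,2\}$, $\gamma\neq0$ where it cannot be scaled to $1$, and the admissible pairs $(\beta,\gamma)$). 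A separate consistency check on the antidiagonal relation at the central index is what forces $n$ to be odd in the families $\mathcal{L}^5_n$ through $\mathcal{L}^8_n$, since an exact middle index $i=n+2-i$ is incompatible with the alternating signs.

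Finally I would show the listed algebras are pairwise non-isomorphic by comparing invariants insensitive to the allowed basis changes: whether $[e_3,e_3]$ vanishes, whether the antidiagonal products vanish (which separates the even and odd $n$ regimes), and the orbit of $(\beta,\gamma)$. The main obstacle I anticipate is not the Leibniz system, which is large but mechanical, but the normalization step: determining exactly which grading-preserving transformations are available and thereby proving that the continuous parameters can be driven precisely to the stated finite set of representatives, neither coarser nor finer.
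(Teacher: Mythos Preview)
The paper does not supply its own proof of this theorem: it is quoted verbatim as a known classification from \cite{Camacho2} and used only as a preliminary input, so there is nothing in the present paper to compare your argument against. Your outline is nonetheless aligned with the standard strategy carried out in that reference: fix an adapted basis from the Jordan data of $\mathcal{R}_{e_1}$, use the natural grading to localize the unknown structure constants in the low components and on the antidiagonal, impose $\mathcal{LI}=0$ on basis triples, and then normalize by grading-preserving automorphisms.

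One point to tighten when you write it out: the sentence ``for $4\le i\le n-1$ the target component $L_{i-1}$ is one-dimensional, which forces $[e_1,e_i]=-e_{i+1}$'' conflates two distinct steps. One-dimensionality of $L_{i-1}$ only gives $[e_1,e_i]=c_ie_{i+1}$ for some scalar $c_i$; the value $c_i=-1$ is not forced by the annihilator remark alone but comes from an inductive application of the Leibniz identity to $\mathcal{LI}(e_1,e_{i-1},e_1)$ together with control of $[e_2,-]$. Likewise, your claim that the Leibniz system ``fixes the antidiagonal coefficients to the alternating pattern $(-1)^i$'' is the right endpoint, but the system actually leaves a single overall scalar on that pattern, which is what distinguishes the $\alpha=0$ families $\mathcal{L}^1_n$--$\mathcal{L}^4_n$ from the $\alpha\neq0$ families $\mathcal{L}^5_n$--$\mathcal{L}^8_n$; the parity obstruction for $n$ appears exactly when that scalar is nonzero. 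These are routine, but since you flagged the normalization step as the main obstacle, it is worth being explicit that the reduction of $(\beta,\gamma)$ to the listed finite set uses rescalings of both generators $e_1$ and $e_3$ independently, and that the invariant separating e.g.\ $\mathcal{L}^7_n$ from $\mathcal{L}^3_n$ is precisely the nonvanishing of the antidiagonal products, not of $\gamma$ itself.
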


The study of naturally graded quasi-filiform Leibniz algebra of  corresponding type in Theorems \ref{quasi1} and \ref{quasi2} can be simplified as follows (see \cite{Camacho}):

\begin{prop}\label{prop3}
Let $L$ be a naturally graded quasi-filiform Leibniz algebra, then
it is isomorphic to one algebra of the non isomorphic families:
$$\mathcal{L}(\alpha,\beta,\gamma): \left\{
    \begin{array}{lll}
        [e_i,e_1]=e_{i+1},& 1\leq i \leq n-3,&\\[1mm]
        [e_{n-1},e_1]=e_{n}+\alpha e_2,& [e_1,e_{n-1}]=\beta e_{n},& [e_{n-1},e_{n-1}]=\gamma e_n,
    \end{array}\right.$$
$$\mathcal{G}(\alpha,\beta,\gamma): \left\{
    \begin{array}{lll}
[e_1,e_1]=e_{2},& [e_i,e_1]=e_{i+1},& 3\leq i \leq n-1,\\[1mm]
[e_{1},e_3]=-e_{4}+\beta e_2, & [e_1,e_i]=-e_{i+1},& 4\leq i \leq n-1,\\[1mm]
[e_{3},e_3]=\gamma e_2,& [e_i,e_{n+2-i}]=(-1)^{i}\alpha e_{n},& 3\leq i \leq n-1,\\[1mm]
\end{array}\right.$$
where $\{e_1,e_2,\dots,e_{n}\}$ is a basis of the algebra and in the algebra $\mathcal{G}(\alpha,\beta,\gamma)$ if $n$ is odd, then $\alpha\in\{0,1\}$, if $n$ is even, then $\alpha=0$.

\end{prop}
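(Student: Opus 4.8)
The plan is to treat Proposition~\ref{prop3} as a consolidation of the two classification theorems rather than as an independent classification: Theorems~\ref{quasi1} and~\ref{quasi2} already exhaust the naturally graded quasi-filiform Leibniz algebras (of type~I and type~II respectively), so it suffices to recognise each of the thirteen listed families as a specialisation of the parameters $(\alpha,\beta,\gamma)$ in one of the two uniform families $\mathcal{L}(\alpha,\beta,\gamma)$ and $\mathcal{G}(\alpha,\beta,\gamma)$. Concretely I would first check that $\mathcal{L}(\alpha,\beta,\gamma)$ and $\mathcal{G}(\alpha,\beta,\gamma)$ are themselves well-defined naturally graded quasi-filiform Leibniz algebras, and then produce an explicit dictionary between the two presentations.

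For well-definedness I would verify the Leibniz identity $\mathcal{LI}(x,y,z)=0$ on all generating triples of basis vectors; because in $\mathcal{L}(\alpha,\beta,\gamma)$ the only products landing outside the central chain are $[e_{n-1},e_1]=e_n+\alpha e_2$, $[e_1,e_{n-1}]=\beta e_n$, $[e_{n-1},e_{n-1}]=\gamma e_n$ and $e_n$ lies in the right annihilator, these checks reduce to a short finite list and impose no relations among $\alpha,\beta,\gamma$. The natural grading follows by assigning weight $1$ to $e_1$ and $e_{n-1}$, weight $i$ to the chain element $e_i$ (for $2\le i\le n-2$), and weight $2$ to $e_n$; one then reads off $[L_i,L_j]\subseteq L_{i+j}$, $L^{n-2}=\langle e_{n-2}\rangle\neq\{0\}$ and $L^{n-1}=\{0\}$, so the algebra is quasi-filiform. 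The same bookkeeping applies to $\mathcal{G}(\alpha,\beta,\gamma)$, the extra point there being that the antidiagonal products $[e_i,e_{n+2-i}]=(-1)^{i}\alpha e_n$ are compatible with the grading (and with the quasi-filiform length) precisely when $n$ is odd; for even $n$ the pairing $i\leftrightarrow n+2-i$ cannot produce a consistent top-weight term, which is exactly the stated dichotomy $\alpha\in\{0,1\}$ ($n$ odd) versus $\alpha=0$ ($n$ even).

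The heart of the argument is the dictionary. For type~I one reads directly
\[\mathcal{L}^{1,\beta}_n=\mathcal{L}(0,\beta,0),\quad \mathcal{L}^{2,\beta}_n=\mathcal{L}(0,\beta,1),\quad \mathcal{L}^{3,\beta}_n=\mathcal{L}(1,\beta,0),\quad \mathcal{L}^{4,\gamma}_n=\mathcal{L}(1,0,\gamma),\quad \mathcal{L}^{5,\beta,\gamma}_n=\mathcal{L}(1,\beta,\gamma),\]
where the coefficient of $e_2$ in $[e_{n-1},e_1]$ is normalised to $0$ or $1$ by the rescaling $e_{n-1}\mapsto\lambda e_{n-1}$, $e_n\mapsto\lambda e_n$ (which fixes the chain, since $e_2=[e_1,e_1]$ is untouched). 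For type~II the corresponding identifications are
\[\mathcal{L}^{1}_n=\mathcal{G}(0,0,0),\ \mathcal{L}^{2}_n=\mathcal{G}(0,1,0),\ \mathcal{L}^{3}_n=\mathcal{G}(0,0,1),\ \mathcal{L}^{4}_n=\mathcal{G}(0,2,1),\ \mathcal{L}^{5}_n=\mathcal{G}(1,0,0),\]
\[\mathcal{L}^{6,\beta}_n=\mathcal{G}(1,\beta,0),\quad \mathcal{L}^{7,\gamma}_n=\mathcal{G}(1,0,\gamma),\quad \mathcal{L}^{8,\beta,\gamma}_n=\mathcal{G}(1,\beta,\gamma),\]
obtained by matching the coefficient of $e_2$ in $[e_1,e_3]$ (giving $\beta$), the coefficient of $e_2$ in $[e_3,e_3]$ (giving $\gamma$), and the coefficient of $e_n$ in the antidiagonal products (giving $\alpha$). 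Since every algebra of Theorems~\ref{quasi1} and~\ref{quasi2} thereby occurs in $\mathcal{L}(\alpha,\beta,\gamma)$ or $\mathcal{G}(\alpha,\beta,\gamma)$, and these theorems are exhaustive, an arbitrary naturally graded quasi-filiform Leibniz algebra is isomorphic to a member of one of the two families.

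I expect the main obstacle to be organisational rather than conceptual: the full Leibniz-identity verification for the general families involves many triples and is easy to mishandle around the antidiagonal products of $\mathcal{G}(\alpha,\beta,\gamma)$, and one must treat the parity split for $\alpha$ uniformly throughout. A secondary subtlety is that the uniform families are \emph{not} presented in reduced form, so different triples $(\alpha,\beta,\gamma)$ may yield isomorphic algebras; if one wants the stronger statement that the parameter ranges appearing in the dictionary are exactly the isomorphism invariants, one must re-invoke the scaling and basis-change normalisations of \cite{Camacho2,Camacho3} rather than merely reading off coefficients.
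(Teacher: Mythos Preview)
Your approach is correct and coincides with the paper's: the paper does not prove Proposition~\ref{prop3} independently but cites \cite{Camacho} and then records, in the remark immediately following, precisely the dictionary you propose between the thirteen families of Theorems~\ref{quasi1}--\ref{quasi2} and the parameter specialisations of $\mathcal{L}(\alpha,\beta,\gamma)$ and $\mathcal{G}(\alpha,\beta,\gamma)$. One small caveat: your explanation of the parity restriction on $\alpha$ in $\mathcal{G}(\alpha,\beta,\gamma)$ is not quite right --- the grading and the quasi-filiform length are fine for even $n$ as well (each $e_i$ with $i\ge 3$ has weight $i-2$, so $[e_i,e_{n+2-i}]$ always lands in weight $n-2$); the restriction $\alpha=0$ for even $n$ is inherited from the classification of Theorem~\ref{quasi2} (where the families $\mathcal{L}^{5}_n,\ldots,\mathcal{L}^{8,\beta,\gamma}_n$ exist only for odd $n$), not from a structural obstruction visible at the level of the family itself.
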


\begin{rem}  The algebras given in Theorem \ref{quasi1} and \ref{quasi2} which stated in Proposition \ref{prop3} are of the form:
$$\begin{array}{lllll}
\mathcal{L}(0,\beta,0):=\mathcal{L}^{1,\beta}_n; & \mathcal{L}(0,\beta,1):=\mathcal{L}^{2,\beta}_n; & \mathcal{L}(1,\beta,0):= \mathcal{L}^{3,\beta}_n;& \mathcal{L}(1,0,\gamma):=\mathcal{L}^{4,\gamma}_n; & \mathcal{L}(1,\beta,\gamma):=\mathcal{L}^{5,\beta,\gamma}_n;\\[1mm]
\mathcal{G}(0,0,0):=\mathcal{L}^{1}_n; & \mathcal{G}(0,1,0):=\mathcal{L}^{2}_n; & \mathcal{G}(0,0,1):=\mathcal{L}^{3}_n; & \mathcal{G}(0,2,1):=\mathcal{L}^{4}_n;& \mathcal{G}(1,0,0):=\mathcal{L}^{5}_n;\\[1mm]
\mathcal{G}(1,\beta,0):=\mathcal{L}^{6,\beta}_n;& \mathcal{G}(1,0,\gamma):=\mathcal{L}^{7,\gamma}_n;& \mathcal{G}(1,\beta,\gamma):=\mathcal{L}^{8,\beta,\gamma}_n.\end{array}$$

\end{rem}

Now we give the classification of solvable Leibniz algebras whose nilradical is  naturally graded quasi-filiform Leibniz algebras. Due to Proposition \ref{prop3} we only need to consider solvable Leibniz algebras with nilradicals $\mathcal{L}(\alpha,\beta,\gamma)$ and $\mathcal{G}(\alpha,\beta,\gamma)$.

In order to start the description we need to know the derivations of naturally graded quasi-filiform Leibniz algebras. From \cite{Abdurasulov0} we recall the derivations of the algebras $\mathcal{L}(\alpha,\beta,\gamma)$ and $\mathcal{G}(\alpha,\beta,\gamma)$.

\begin{prop} \label{prop1} An arbitrary $d\in \Der(\mathcal{L}(\alpha,\beta,\gamma))$ has the following form:
$$\left\{\begin{array}{ll}
d(e_1)=\sum\limits_{t=1}^{n}a_te_t, \\[1mm]
d(e_2)=(2a_1+a_{n-1}\alpha)e_2+\sum\limits_{t=3}^{n-2}a_{t-1}e_t+(a_{n-1}+a_{n-1}\beta)e_n,\\[1mm]
d(e_i)=(ia_1+a_{n-1}\alpha)e_i+\sum\limits_{t=i+1}^{n-2}a_{t-i+1}e_t, \quad 3\leq i\leq n-2,\\[1mm]
d(e_{n-1})=\sum\limits_{t=2}^{n}b_te_t, \\[1mm]
d(e_n)=(b_{n-3}-a_{n-3}\alpha)e_{n-2}+(b_{n-1}+a_1+a_{n-1}\gamma-a_{n-1}\alpha(1+\beta))e_n,\\[1mm]
\end{array}\right.$$
where
$$\begin{array}{l}
b_i=a_i\alpha, \ 2\leq i\leq n-4, \ \beta(b_{n-3}-a_{n-3}\alpha)=\gamma(b_{n-3}-a_{n-3}\alpha)=0, \ b_{n-1}\alpha=a_1\alpha+a_{n-1}\alpha^2,\\[1mm]
\gamma b_{n-1}=\gamma(a_1+a_{n-1}\gamma-a_{n-1}\alpha(1+\beta)), \ \ \gamma a_{n-1}=\beta a_{n-1}(\gamma-\alpha(1+\beta)).
\end{array}
$$

\end{prop}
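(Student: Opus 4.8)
The plan is to exploit that $\mathcal{L}(\alpha,\beta,\gamma)$ is generated, as an algebra, by the two basis vectors $e_1$ and $e_{n-1}$. Indeed, the relations $[e_i,e_1]=e_{i+1}$ for $1\le i\le n-3$ give $e_2=[e_1,e_1]$ and, inductively, $e_i=[e_{i-1},e_1]$ for $2\le i\le n-2$, while $[e_{n-1},e_1]=e_n+\alpha e_2$ yields $e_n=[e_{n-1},e_1]-\alpha e_2$. Since a derivation is determined by its values on a generating set, I would set $d(e_1)=\sum_{t=1}^{n}a_t e_t$ and $d(e_{n-1})=\sum_{t=1}^{n}b_t e_t$ with free coefficients, compute $d$ on every other basis vector by the Leibniz rule, and then collect the conditions that make $d$ consistent with all the defining products.

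First I would propagate $d$ along the chain. The base case $d(e_2)=[d(e_1),e_1]+[e_1,d(e_1)]$ uses only the products $[e_t,e_1]$ (equal to $e_{t+1}$ for $t\le n-3$, to $e_n+\alpha e_2$ for $t=n-1$, and zero otherwise) and $[e_1,e_t]$ (nonzero solely for $t=1$ and $t=n-1$), and gives the stated $d(e_2)$ at once. For $3\le i\le n-2$ I would induct via $d(e_i)=[d(e_{i-1}),e_1]+[e_{i-1},d(e_1)]$; the only nonzero product with $e_{i-1}$ on the left is $[e_{i-1},e_1]=e_i$, so the second summand contributes exactly $a_1e_i$, which is what makes the diagonal coefficient grow from $(i-1)a_1$ to $ia_1$, while $[d(e_{i-1}),e_1]$ shifts the remaining pattern, reproducing $d(e_i)=(ia_1+a_{n-1}\alpha)e_i+\sum_{t=i+1}^{n-2}a_{t-i+1}e_t$. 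Finally, $d(e_n)=[d(e_{n-1}),e_1]+[e_{n-1},d(e_1)]-\alpha\,d(e_2)$ produces an expression a priori supported on $e_2,\dots,e_{n-2},e_n$.

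Second, I would impose the Leibniz rule on the defining relations not yet used, which is where all the constraints originate. The relation $[e_n,e_1]=0$ is decisive: writing $d(e_n)=\sum_t c_t e_t$, the identity $[d(e_n),e_1]+[e_n,d(e_1)]=0$ forces $c_t=0$ for $2\le t\le n-3$, and these equations are precisely $b_i=a_i\alpha$ for $2\le i\le n-4$ together with $b_{n-1}\alpha=a_1\alpha+a_{n-1}\alpha^2$; what survives in $d(e_n)$ is then exactly the $e_{n-2}$- and $e_n$-terms of the stated formula. The relations $[e_i,e_{n-1}]=0$ $(2\le i\le n-2)$ and $[e_1,e_{n-1}]=\beta e_n$ force $b_1=0$, which matches the fact that the statement writes $d(e_{n-1})=\sum_{t=2}^{n}b_t e_t$. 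Matching the $e_{n-2}$- and $e_n$-coefficients in $[e_1,e_{n-1}]=\beta e_n$ and in $[e_{n-1},e_{n-1}]=\gamma e_n$ then yields the remaining conditions $\beta(b_{n-3}-a_{n-3}\alpha)=\gamma(b_{n-3}-a_{n-3}\alpha)=0$, $\gamma b_{n-1}=\gamma(a_1+a_{n-1}\gamma-a_{n-1}\alpha(1+\beta))$, and $\gamma a_{n-1}=\beta a_{n-1}(\gamma-\alpha(1+\beta))$.

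I expect the main difficulty to be the careful extraction of the $e_n$-coefficient in the two parameter-laden relations, where $\alpha$, $\beta$, and $\gamma$ interact simultaneously: in $[e_1,e_{n-1}]=\beta e_n$ the term $[d(e_1),e_{n-1}]$ feeds $a_{n-1}\gamma$ into the $e_n$-slot through $[e_{n-1},e_{n-1}]=\gamma e_n$, and balancing this against $\beta\,d(e_n)$ is exactly what produces the deceptively nonlinear condition $\gamma a_{n-1}=\beta a_{n-1}(\gamma-\alpha(1+\beta))$; keeping track of the stray $\alpha e_2$ coming from $[e_{n-1},e_1]=e_n+\alpha e_2$ throughout these computations is the delicate bookkeeping. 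Once the full list of conditions is assembled, I would finish by verifying the converse: any coefficients $a_t,b_t$ satisfying them define a map that respects every defining product, hence a genuine derivation, which establishes that the displayed family exhausts $\Der(\mathcal{L}(\alpha,\beta,\gamma))$.
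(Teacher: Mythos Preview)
Your approach is correct and is the standard way to compute the derivation algebra of a nilpotent Leibniz algebra generated by two elements: assign arbitrary images to the generators $e_1,e_{n-1}$, propagate along the chain $e_{i+1}=[e_i,e_1]$ via the Leibniz rule, and then extract the linear constraints from all remaining structure relations. The computations you outline check out: the chain gives the displayed $d(e_2),\dots,d(e_{n-2})$; the relation $[e_2,e_{n-1}]=0$ already forces $b_1=0$; the relation $[e_n,e_1]=0$ yields $b_i=\alpha a_i$ for $2\le i\le n-4$ together with $b_{n-1}\alpha=a_1\alpha+a_{n-1}\alpha^2$ (once $b_1=0$), and then $[e_1,e_{n-1}]=\beta e_n$ and $[e_{n-1},e_{n-1}]=\gamma e_n$ deliver exactly the last three constraints.

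Note, however, that the paper does not give its own proof of this proposition at all: it is simply recalled from~\cite{Abdurasulov0}. So there is no argument in the present paper to compare yours against. Your write-up is a self-contained derivation that the paper omits; the method is undoubtedly the same as the cited reference, since there is essentially only one way to do this computation. One minor addition that would make your argument airtight: explicitly remark that the remaining zero products (e.g.\ $[e_{n-2},e_1]=0$, $[e_i,e_j]=0$ for $j\notin\{1,n-1\}$, and $[e_n,e_{n-1}]=0$) impose no new conditions, which is immediate once you observe that the relevant bracket slots vanish identically.
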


\begin{prop} \label{prop2} Any derivation of the  algebras $\mathcal{G}(\alpha,\beta,\gamma)$ has the following form:

$$\begin{cases}
d(e_1)=\sum\limits_{t=1}^na_{t}e_t, \\[1mm]
d(e_2)=(2a_{1}+a_{3}\beta)e_2, \\[1mm]
d(e_3)=\sum\limits_{t=2}^{n}b_{t}e_t, \\[1mm]
d(e_4)=\gamma a_3e_2+(a_{1}+b_{3})e_4+\sum\limits_{t=5}^{n-1}b_{t-1}e_t+(b_{n-1}-a_{n-1}\alpha)e_{n},\\[1mm]
d(e_i)=((i-3)a_{1}+b_{3})e_i+\sum\limits_{t=i+1}^{n-1}b_{t-i+3}e_t+(b_{n-i+3}-(-1)^ia_{n-i+3}\alpha)e_{n},\ 5\leq i\leq n-1,\\[1mm]
d(e_n)=((n-3)a_{1}+b_{3}-(-1)^{n}a_3\alpha)e_{n}, \\[1mm]
\end{cases}$$
where
$$
2a_3\gamma+b_3\beta=a_1\beta+a_{3}\beta^2,\ \ (1+(-1)^{n})a_{n-1}\alpha=0,\   \ 2b_{3}\gamma=\gamma(2a_1+a_{3}\beta), \ \ b_3\alpha=a_1\alpha-(-1)^na_{3}\alpha^2.
$$
\end{prop}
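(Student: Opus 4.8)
The plan is to exploit that $\mathcal{G}(\alpha,\beta,\gamma)$ is generated as an algebra by the two elements $e_1$ and $e_3$: indeed $e_2=[e_1,e_1]$ and $e_{i+1}=[e_i,e_1]$ for $3\le i\le n-1$, so every basis vector except $e_1,e_3$ lies in $\mathcal{G}^2$ and is obtained from $e_1,e_3$ by iterated right multiplication by $e_1$. Consequently a derivation $d$ is completely determined by the two vectors $d(e_1)=\sum_{t=1}^n a_t e_t$ and $d(e_3)=\sum_{t=1}^n b_t e_t$, and the whole content of the statement is (i) a set of recursive formulas expressing each $d(e_i)$ through the $a_t,b_t$, and (ii) the compatibility relations that a pair $(d(e_1),d(e_3))$ must satisfy in order to extend to a genuine derivation. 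I would first record these two free vectors and then propagate $d$ along the generating chain using the derivation identity $d([x,y])=[d(x),y]+[x,d(y)]$.

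First I compute $d(e_2)=d([e_1,e_1])=[d(e_1),e_1]+[e_1,d(e_1)]$; the two telescoping sums of shifted basis vectors cancel and leave exactly $d(e_2)=(2a_1+a_3\beta)e_2$. The decisive step is then to evaluate $d(e_4)$ in two different ways, using the two relations $e_4=[e_3,e_1]$ and $e_4=\beta e_2-[e_1,e_3]$. Matching the two resulting expressions coefficientwise forces the $e_1$-coefficient of $d(e_3)$ to vanish, i.e. $b_1=0$ (so that $d(e_3)=\sum_{t=2}^n b_t e_t$), and simultaneously yields the first compatibility relation $2a_3\gamma+b_3\beta=a_1\beta+a_3\beta^2$ from the $e_2$-coefficient and the relation $(1+(-1)^n)a_{n-1}\alpha=0$ from the $e_n$-coefficient. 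Once $b_1=0$ is in hand, I would establish the closed forms for $d(e_i)$, $5\le i\le n$, by induction on $i$ via $e_i=[e_{i-1},e_1]$: the term $[e_{i-1},d(e_1)]$ contributes $a_1e_i+(-1)^{i-1}a_{n-i+3}\alpha e_n$ (the second summand coming from the antidiagonal product $[e_{i-1},e_{n-i+3}]$), while $[d(e_{i-1}),e_1]$ shifts the inductive hypothesis by one index and raises the coefficient of $e_i$ by $a_1$; this reproduces precisely the stated formulas, including the endpoint $d(e_n)=((n-3)a_1+b_3-(-1)^n a_3\alpha)e_n$.

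It then remains to extract the last two relations from the products not yet used. Applying $d$ to $[e_3,e_3]=\gamma e_2$ gives, after $b_1=0$, the identity $\gamma(2a_1+a_3\beta)=2b_3\gamma$, and applying $d$ to the antidiagonal product $[e_3,e_{n-1}]=-\alpha e_n$, together with the already-derived formula for $d(e_n)$, gives $b_3\alpha=a_1\alpha-(-1)^n a_3\alpha^2$. The main obstacle I anticipate is not any single computation but the bookkeeping needed to confirm completeness and consistency: one must check that applying the derivation identity to every remaining structural product---the relations $[e_1,e_i]=-e_{i+1}$ for $4\le i\le n-1$ and all the antidiagonal products $[e_i,e_{n+2-i}]=(-1)^i\alpha e_n$ for general $i$---produces no constraint beyond the four already found, and that the parity conventions ($n$ even forcing $\alpha=0$, the signs $(-1)^i$ and $(-1)^n$, and the range restrictions that kill boundary terms such as $[e_n,e_1]=[e_1,e_n]=0$) are handled uniformly. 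This final verification, though routine, is precisely where sign and index errors are easiest to commit.
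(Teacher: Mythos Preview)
The paper does not supply a proof of this proposition; it is quoted from the companion preprint \cite{Abdurasulov0}. So there is no in-paper argument to compare against, and I can only assess your outline on its own terms. Your overall strategy---fixing $d(e_1)=\sum a_te_t$, $d(e_3)=\sum b_te_t$ and propagating via $e_{i+1}=[e_i,e_1]$---is the right one and reproduces the formulas for $d(e_2),d(e_4),\dots,d(e_n)$ correctly.

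There is, however, a genuine gap at the step where you assert that comparing the two expressions for $d(e_4)$ (coming from $e_4=[e_3,e_1]$ and from $e_4=\beta e_2-[e_1,e_3]$) forces $b_1=0$. If you actually write both expressions out, neither has an $e_1$- or $e_3$-component; the only nontrivial matchings are at the $e_2$- and $e_n$-coefficients, and they yield exactly the \emph{single} equation
\[
2b_1+2a_3\gamma+b_3\beta=a_1\beta+a_3\beta^2
\]
together with $(1+(-1)^n)a_{n-1}\alpha=0$. You therefore cannot read off $b_1=0$ and the relation $2a_3\gamma+b_3\beta=a_1\beta+a_3\beta^2$ as two separate consequences of this comparison; you obtain only one linear relation linking $b_1$ to the other parameters. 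The vanishing $b_1=0$ has to come from somewhere else, and the natural source is a \emph{zero} product: for $n\ge 6$ one has $[e_4,e_3]=0$, and applying $d$ gives
\[
0=[d(e_4),e_3]+[e_4,d(e_3)]=b_1e_5+(\text{a multiple of }e_n),
\]
since $[e_4,e_1]=e_5$ is the only bracket producing an $e_5$-term. This forces $b_1=0$, after which your $e_2$-coefficient equation becomes exactly the first compatibility relation. This also exposes an omission in your final checklist: you plan to verify only the remaining \emph{nonzero} products, but derivations must also respect the vanishing products $[e_i,e_j]=0$, and here one of them is indispensable (and, when $\alpha\neq 0$, these zero products also produce further constraints on the $b_t$, as the paper makes explicit in the special cases $\mathcal{G}(1,1,0)$ and $\mathcal{G}(1,2,1)$).
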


The following remark describes the maximal dimensions of the complemented spaces to  $\mathcal{L}(\alpha,\beta,\gamma)$ and $\mathcal{G}(\alpha,\beta,\gamma)$.

\begin{rem} From the Propositions \ref{prop1}-\ref{prop2} and using the Theorem \ref{nilr} it follows that for the possible values of the parameters $\alpha, \beta$ and $\gamma$, we derive the following table:

\begin{table}[h!]
\caption{The dimensions of the complemented spaces to  $\mathcal{L}(\alpha,\beta,\gamma)$ and $\mathcal{G}(\alpha,\beta,\gamma)$.}
\label{Codim}
{\small\begin{longtable}{l|l|l}
    \hline
  Algebra & restrictions  & dimensional of \\
  &&complementary space \\
  \hline
    $\mathcal{L}(0,\beta,0)$   & $b_i=0, \ 2\leq i\leq n-4, \ \beta b_{n-3}=0,$& dim$Q\leq 2$ \\[1mm]
  \hline
    $\mathcal{L}(0,0,1)$   & $a_{n-1}=b_i=0, \ 2\leq i\leq n-3, \ b_{n-1}=a_1,$& dim$Q=1$ \\[1mm]
  \hline
    $\mathcal{L}(0,1,1)$   & $b_i=0, \ 2\leq i\leq n-3, \ b_{n-1}=a_1+a_{n-1},$& dim$Q\leq2$ \\[1mm]
  \hline
    $\mathcal{L}(1,-1,0)$   & $b_i=a_i, \ 2\leq i\leq n-3, \ b_{n-1}=a_1+a_{n-1},$& dim$Q\leq2$ \\[1mm]
  \hline
  $\mathcal{L}(1,0,0)$   & $b_i=a_i, \ 2\leq i\leq n-4, \ b_{n-1}=a_1+a_{n-1},$& dim$Q\leq2$ \\[1mm]
  \hline
   $\mathcal{L}(1,1,0)$   & $a_{n-1}=0, \ b_i=a_i, \ 2\leq i\leq n-3, \ b_{n-1}=a_1,$& dim$Q=1$ \\[1mm]
  \hline
   $\mathcal{L}(1,0,\gamma), \ \gamma\neq0$   & $a_{n-1}=0, \ b_i=a_i, \ 2\leq i\leq n-3, \ b_{n-1}=a_1,$& dim$Q=1$ \\[1mm]
  \hline
   $\mathcal{L}(1,1,1)$  & $a_{n-1}=0, \ b_i=a_i, \ 2\leq i\leq n-3, \ b_{n-1}=a_1,$& dim$Q=1$ \\[1mm]
  \hline
   $\mathcal{L}(1,2,4)$  & $a_{n-1}=0, \ b_i=a_i, \ 2\leq i\leq n-3, \ b_{n-1}=a_1,$& dim$Q=1$ \\[1mm]
  \hline
   $\mathcal{G}(0,0,0)$  & & dim$Q\leq2$ \\[1mm]
  \hline
  $\mathcal{G}(0,1,0)$  & $b_{3}=a_1+a_{3}, $& dim$Q\leq2$ \\[1mm]
  \hline
 $\mathcal{G}(0,0,1)$  & $b_{3}=a_1, \ a_{3}=0, $& dim$Q=1$ \\[1mm]
  \hline
 $\mathcal{G}(0,2,1)$  & $b_{3}=a_1+a_{3}, $& dim$Q\leq2$ \\[1mm]
  \hline
$\mathcal{G}(1,0,0)$  & $b_{3}=a_1+a_{3}, \ a_{n-1}=0, $& dim$Q\leq2$ \\[1mm]
  \hline
$\mathcal{G}(1,1,0)$  & $b_{3}=a_1+a_{3}, \ a_{n-1}=0, $& dim$Q\leq2$ \\[1mm]
  \hline
$\mathcal{G}(1,2,0)$  & $b_{3}=a_1, \ a_{3}=0, \ a_{n-1}=0, $& dim$Q=1$ \\[1mm]
  \hline
$\mathcal{G}(1,0,\gamma), \ \gamma\neq0$  & $b_{3}=a_1, \ a_{3}=0, \ a_{n-1}=0, $& dim$Q=1$ \\[1mm]
  \hline
$\mathcal{G}(1,-2,1)$  & $b_{3}=a_1, \ a_{3}=0, \ a_{n-1}=0, $& dim$Q=1$ \\[1mm]
  \hline
$\mathcal{G}(1,2,1)$  & $b_{3}=a_1+a_{3}, \ a_{n-1}=0, $& dim$Q\leq2$ \\[1mm]
  \hline
$\mathcal{G}(1,4,2)$  & $b_{3}=a_1, a_{3}=0,\ a_{n-1}=0, $& dim$Q=1$ \\[1mm]
  \hline
\end{longtable}}
\end{table}

\end{rem}

\begin{rem}\label{rem1}Thus from the above table \ref{Codim} and the obtained results  it can be seen that the classifications of the solvable Leibniz algebras with the nilradical $\mathcal{L}^{1,\beta}_n,  \mathcal{L}^{2,\beta}_n, $ $ \mathcal{L}^{3,1}_n,  \mathcal{L}^{4,\gamma}_n, $ $ \mathcal{L}^{5,\beta,\gamma}_n,  \mathcal{L}^{1}_n, $ $ \mathcal{L}^{2}_n, \mathcal{L}^{3}_n, \mathcal{L}^{4}_n, $ $ \mathcal{L}^{5}_n, \mathcal{L}^{6,2}_n, $ $ \mathcal{L}^{7,\gamma}_n,  \mathcal{L}^{8,-2,1}_n$ and $\mathcal{L}^{8,4,2}_n$ is obtained in the following papers:
\item \begin{itemize}
          \item  The classification of the solvable Leibniz algebra with the nilradical $\mathcal{L}^{1,\beta}_n$ is stated in paper \cite{Abdurasulov1}.

 \item The classification of the solvable Leibniz algebra with the nilradical $\mathcal{L}^{2,1}_n$ is stated in paper \cite{Abror2}.

 \item The classifications of the solvable Leibniz algebras with the nilradicals $\mathcal{L}^{2,0}_n, \ \mathcal{L}^{3,1}_n, \ \mathcal{L}^{4,\gamma}_n, \ \mathcal{L}^{5,\beta,\gamma}_n, $ $ \mathcal{L}^{6,2}_n, \ \mathcal{L}^{7,\gamma}_n, \ \mathcal{L}^{8,-2,1}_n$ and $\mathcal{L}^{8,4,2}_n$ are stated in paper \cite{Abdurasulov0}. Note that there is no solvable Leibniz algebra with nilradicals $\mathcal{L}^{2,0}_n, \ \mathcal{L}^{3,1}_n, \ \mathcal {L}^{4,\gamma}_n, \ \mathcal{L}^{5,1,1}_n, \ \mathcal {L}^{5,2,4}_n,$ ($\mathcal{L}(0,0,1)$, $\mathcal{L}(1,1,0)$, $\mathcal{L}(1,0,\gamma)$, $\mathcal{L}(1,1,1)$, $\mathcal{L}(1,2,4)$,  respectively).

 \item The classifications of the solvable Leibniz algebra with the nilradicals $\mathcal{L}^{1}_n, \mathcal{L}^{2}_n, \mathcal{L}^{3}_n, \mathcal{L}^{4}_n$ and $\mathcal{L}^{5}_n$ are stated in papers in papers  \cite{Shab}, \cite{Shab22}, \cite{Shab}, \cite{Shab4} and \cite{Shab5}, respectively.

  \end{itemize}
\end{rem}

From the  Remark \ref{rem1}  it can be seen that the classifications of the solvable Leibniz algebras with the nilradical $\mathcal{L}^{3,-1}_n, \ \mathcal{L}^{3,0}_n, \ \mathcal {L}^{6,1}_n, \ \mathcal{L}^{8,2,1}_n$ ($\mathcal{L}(1,-1,0)$, $\mathcal{L}(1,0,0)$, $\mathcal{G}(1,1,0)$, $\mathcal{G}(1,2,1)$, respectively) and the dimension of complememtary space equals one are not described.

\section{Main results. One dimensional extensions of the naturally graded quasi-filiform Leibniz
algebras $\mathcal{L}(1,-1,0)$ and $\mathcal{L}(1,0,0)$.}

\begin{thm}\label{thm1} An arbitrary solvable Leibniz algebra with a codimension one nilradical $\mathcal{L}(1,-1,0)$ is isomorphic to one of the following non-isomorphic algebras:
$$R_{n+1}^1(1,-1,0), \ R_{n+1}^2(1,-1,0).$$
\end{thm}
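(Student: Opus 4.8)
The plan is to apply Mubarakzyanov's scheme. Write $R=\mathcal{L}(1,-1,0)\oplus\langle x\rangle$, where $x$ spans the one-dimensional complement $Q$ to the nilradical $N=\mathcal{L}(1,-1,0)$. By Theorem \ref{nilr} the restriction $\mathcal{R}_x|_N$ is a non-nilpotent derivation of $N$, so it must be one of the derivations described in Proposition \ref{prop1} with $(\alpha,\beta,\gamma)=(1,-1,0)$, subject to the additional relations recorded for this algebra in Table \ref{Codim} (namely $b_i=a_i$ for $2\le i\le n-3$ and $b_{n-1}=a_1+a_{n-1}$). First I would record the diagonal entries of this derivation, observe that non-nilpotency forces the diagonal part to be nonzero, and use the freedom to rescale $x$ to normalize the leading diagonal coefficient.

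The second step is to bring $\mathcal{R}_x$ to a canonical shape using the two remaining degrees of freedom: conjugating by an automorphism of $N$, and replacing $x$ by $x+n$ for $n\in N$, which adds the inner derivation $\mathcal{R}_n$ and thus permits cancellation of the off-diagonal parameters $a_2,\dots,a_n$ together with the surviving $b$'s. The goal is to reduce the products $[e_i,x]$ to the near-diagonal form $[e_i,x]=\lambda_i e_i$, retaining only those terms genuinely forced by the graded structure of $N$ and by the non-Lie relations coming from $\alpha=1$, $\beta=-1$ (which couple the $e_2$- and $e_n$-components).

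Third, I would determine the left action $[x,e_i]$ and the square $[x,x]$. The relations $[x,x]\in Ann_r(R)$ and $[x,e_i]+[e_i,x]\in Ann_r(R)$, together with the explicit right annihilator of $N$, confine the unknown left products to a very small subspace. Propagating the Leibniz identities $\mathcal{LI}(x,e_i,e_1)=0$ and $\mathcal{LI}(e_i,x,e_1)=0$ along the generating chain $e_{i+1}=[e_i,e_1]$ then expresses each $[x,e_i]$ in terms of the $\lambda_i$, and a final adjustment $x\mapsto x+n$ normalizes $[x,x]$.

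Imposing the complete family of identities $\mathcal{LI}(\cdot,\cdot,\cdot)=0$ involving $x$ then collapses the free parameters to finitely many admissible values, which after a last scaling yield precisely the two families $R_{n+1}^1(1,-1,0)$ and $R_{n+1}^2(1,-1,0)$; to finish, I would verify they are non-isomorphic by means of an invariant, such as the Jordan type or eigenvalue ratios of $\mathcal{R}_x$ (well defined up to the scaling permitted on $Q$), or the dimension of a canonical subspace like $[R,R]\cap Ann_r(R)$. I expect the principal obstacle to lie in steps two and three carried out together: reducing $\mathcal{R}_x$ to canonical form while simultaneously tracking how each normalizing transformation alters the left multiplications and $[x,x]$, and checking the consistency of every Leibniz identity, since the non-Lie terms governed by $\alpha$ and $\beta$ intertwine the $e_2$- and $e_n$-components and must be handled with care.
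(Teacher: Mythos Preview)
Your outline is correct and follows essentially the same route as the paper: write $\mathcal{R}_x|_N$ using Proposition~\ref{prop1} with the Table~\ref{Codim} constraints, normalize via rescaling of $x$, automorphisms of $N$, and shifts $x\mapsto x+n$, then impose the Leibniz identities involving $x$ to kill the remaining parameters. One point worth flagging for the execution: the paper uses $\mathcal{LI}(x,e_n,e_1)=0$ early to force $a_{n-1}=-a_1$, so there is effectively a \emph{single} diagonal parameter rather than the two suggested by $\dim Q\le 2$; this is what makes the rescaling $a_1=1$ unambiguous and explains why the final list has no continuous parameter, and the two algebras are ultimately distinguished only by whether the residual coefficient $c_2$ in $[x,e_1]$ equals $1$ or can be normalized to $0$.
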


\begin{proof} Let $R$ be a solvable Leibniz algebra with nilradical $\mathcal{L}(1,-1,0)$ and let $\{e_1, e_2, \dots, e_{n},x\}$ be a basis of the algebra $R$. Then using the form of derivation in the Proposition \ref{prop1} for algebra  $\mathcal{L}(1,-1,0)$ and since $e_1,e_{n-1}\notin Ann_{r}(R), \ e_2,\dots, e_{n-2}\in Ann_{r}(R)$ we have the products in the algebra $R$:

$$\begin{cases}
[e_i,e_1]=e_{i+1}, \ [e_{n-1},e_1]=e_{n}+e_2,\ [e_1, e_{n-1}]=-e_{n}, & 1\leq i \leq n-3,\\[1mm]
[e_1,x]=\sum\limits_{t=1}^{n}a_te_t, \quad [e_i,x]=(ia_1+a_{n-1})e_i+\sum\limits_{t=i+1}^{n-2}a_{t-i+1}e_t, & 2\leq i\leq n-2,\\[1mm]
[e_{n-1},x]=\sum\limits_{t=2}^{n-3}a_te_t+b_{n-2}e_{n-2}+(a_1+a_{n-1})e_{n-1}+b_ne_n, &
[e_n,x]=(2a_1+a_{n-1})e_n,\\[1mm]
[x,e_1]=-a_1e_1+\sum\limits_{t=2}^{n-2}c_te_t-a_{n-1}e_{n-1}+c_ne_n, \quad [x, e_i]=0, & 2\leq i\leq n-2,\\[1mm]
[x,e_{n-1}]=\sum\limits_{t=2}^{n-2}d_te_t-(a_1+a_{n-1})e_{n-1}+d_ne_n, \\[1mm]
[x,e_n]=-(a_{1}+a_{n-1})e_2+\sum\limits_{t=3}^{n-2}d_{t-1}e_t-(2a_1+a_{n-1})e_n,& [x,x]=\sum\limits_{t=2}^{n-2}\alpha_te_t+\alpha_ne_n. \\[1mm]
\end{cases}$$

The equality ${\mathcal L}(x,e_{n},e_1)=0$ implies
$$a_{n-1}=-a_1,\quad d_{t}=0,\ 2\leq t\leq n-4.$$

It is known that $a_1\neq 0$, otherwise the operator $ \mathcal{R}_x $ will be nilpotent. Then by rescaling $x^\prime=\frac{1}{a_1}x$  we can assume $a_{1}=1$. Applying the basis transformations in the following form:

$$e_i'=e_i+\sum\limits_{t=i+1}^{n-2}A_{t-i+1}e_t,\ \ 1\leq i\leq n-2,$$
$$e_{n-1}'=e_{n-1}+\sum\limits_{t=2}^{n-3}A_{t}e_t+B_1e_{n-2}+B_2e_n,\quad e_n'=e_n,\ \ x'=x,$$
with
$$A_2=-a_2,\ \ A_i=-\frac{1}{i-1}(a_i+\sum\limits_{t=2}^{i-1}A_ta_{i-t+1}),\ \ 3\leq i\leq n-3,$$
$$B_{1}=-\frac{1}{n-3}(b_{n-2}+\sum\limits_{t=2}^{n-3}A_ta_{n-t-1}),\ \ \ B_2=-b_n,$$
$$A_{n-2}=-\frac{1}{n-4}(a_{n-2}+B_1+\sum\limits_{t=2}^{i-1}A_ta_{i-t+1}),$$
we obtain
$$a_i=0,\ 2\leq i\leq n-2, \quad b_{n-2}=b_n=0.$$

Taking the change
$$x'=x-\sum\limits_{t=3}^{n-2}c_{t}e_{t-1}-c_ne_{n-1}-\frac{\alpha_{n-2}}{n-3}e_{n-2}-\alpha_ne_n,$$
 we derive $\alpha_{n-2}=\alpha_n=c_n=0,\ c_t=0,\ 3\leq t\leq n-2$.

Now considering the equalities ${\mathcal LI}(x,e_{n-1},x)={\mathcal L}(x,e_1,x)={\mathcal L}(x,x,e_1)=0$, we derive the restrictions:
$$d_{n-3}=d_{n-2}=d_n=\alpha_{t}=a_n=0, \quad 2\leq t\leq n-3.$$

Thus, the table of multiplications of the algebra $R$ has form:
$$\left\{\begin{array}{lll}
[e_1,x]=e_1-e_{n-1},& [e_i,x]=(i-1)e_i, & 2\leq i\leq n-2,\\[1mm]
[e_n,x]=e_n,& [x,e_1]=-e_1+c_2e_2+e_{n-1}, & [x,e_{n}]=-e_n. \\[1mm]
\end{array}\right.$$

Using the multiplication table of the algebra $R$, it is sufficient to consider the following basis change:
$$e_i^\prime=A^{i}e_i,\quad 1\leq i\leq n-2, \quad e_{n-1}^\prime=e_{n-1}, \quad e_{n}^\prime=A(1-A)e_2+Ae_{n}, \quad x^\prime=x.$$

From the product $[x^\prime,e_1^\prime]=-e_1^\prime+c_2^\prime e_2^\prime+e_{n-1}^\prime$, we obtain the following relation:
$$c_2^\prime=\frac{c_2-1+A}{A}.$$

If $c_2\neq1$, then by putting $A=1-c_1$, we get $c_2'=0$ and we have the algebra $R_{n+1}^1(1,-1,0)$;

If $c_2=1$, we obtain the algebra $R_{n+1}^2(1,-1,0)$.
\end{proof}

\begin{thm}\label{thm2} An arbitrary solvable Leibniz algebra with a codimension one nilradical $\mathcal{L}(1,0,0)$ is isomorphic to one of the following pairwise non-isomorphic algebras:
$$R_{n+1}^1(1,0,0),\ R_{n+1}^2(1,0,0),\ R_{n+1}^3(1,0,0), \ R_{n+1}^4(1,0,0),\ R_{n+1}^5(1,0,0),\ R_{n+1}^6(1,0,0).$$

\end{thm}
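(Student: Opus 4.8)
The plan is to replicate the scheme used for Theorem~\ref{thm1}, now taking $N=\mathcal{L}(1,0,0)$ as the nilradical and a one-dimensional complement $Q=\langle x\rangle$, so that $R$ has basis $\{e_1,\dots,e_n,x\}$. By Theorem~\ref{nilr} the restriction $\mathcal{R}_{x|_N}$ is a non-nilpotent derivation of $N$, hence is given by Proposition~\ref{prop1} specialised to $\alpha=1,\ \beta=0,\ \gamma=0$; this reads off the products $[e_i,x]$, where the derivation constraints relax to $b_i=a_i$ for $2\le i\le n-4$ and $b_{n-1}=a_1+a_{n-1}$, so that $b_{n-3}$ becomes a free parameter (one more than in the $\beta=-1$ case of Theorem~\ref{thm1}). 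The remaining products $[x,e_i]$, $[x,e_n]$ and $[x,x]$ are written with undetermined coefficients $c_t,d_t,\alpha_t$, subject to the structural constraints $e_2,\dots,e_{n-2}\in Ann_r(R)$ and $[x,x],\,[x,e_1]+[e_1,x]\in Ann_r(R)$, exactly as in the previous theorem.

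The reduction then proceeds in the same order. Imposing ${\mathcal LI}(x,e_n,e_1)=0$ produces the first batch of relations among the $a_t$ and $d_t$; crucially, with $\beta=0$ this identity does not collapse $a_{n-1}$ to $-a_1$, so $a_{n-1}$ survives as a genuine degree of freedom. Since $\mathcal{R}_x$ is non-nilpotent we have $a_1\neq0$, and rescaling $x'=\tfrac1{a_1}x$ normalises $a_1=1$. I would then apply the triangular basis change $e_i'=e_i+\sum_{t>i}A_{t-i+1}e_t$, together with the matching corrections on $e_{n-1}$ and $e_n$ used in Theorem~\ref{thm1}, choosing the $A_t$ recursively so as to absorb $a_2,\dots,a_{n-2}$ and the free $b$-coefficients; a subsequent shift of the form $x'=x-\sum_t c_t e_{t-1}-\cdots$ clears the quadratic coefficients $\alpha_t$ and a block of the $c_t$. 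Finally the identities ${\mathcal LI}(x,e_{n-1},x)$, ${\mathcal LI}(x,e_1,x)$ and ${\mathcal LI}(x,x,e_1)$ eliminate the residual $d$-coefficients, leaving a compact multiplication table in which only a small set of parameters (coming from $a_{n-1}$, $a_n$, $b_{n-3}$ and a leftover left-multiplication coefficient such as $c_2$) can remain nonzero.

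The concluding and most delicate step is the isomorphism analysis. Using the residual admissible transformations — the scalings $e_i'=A^ie_i$ and the low-degree substitutions mixing $e_2$ and $e_n$ employed at the end of the proof of Theorem~\ref{thm1} — I would track how each surviving structure constant transforms and normalise it to a discrete representative ($0$ or $1$, or a value pinned by a defining relation). The branching of these normalisations over the independent surviving parameters is what produces six cases instead of two, yielding $R_{n+1}^1(1,0,0),\dots,R_{n+1}^6(1,0,0)$; pairwise non-isomorphism would then be confirmed by exhibiting for each family an invariant — a ratio or vanishing pattern of structure constants — preserved under all admissible changes of basis.

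The main obstacle will be exactly this final bookkeeping. Because $\beta=0$ relaxes the rigid cancellations present when $\beta=-1$, more parameters survive and the residual basis-change group is consumed in a more intricate way: each normalisation step fixes part of that group, so one must verify at every branch which transformations remain available, ensure the six cases are both exhaustive and mutually exclusive, and check that no admissible change of basis secretly identifies two of the families. Keeping this accounting consistent — rather than any single Leibniz-identity computation — is where the real work lies.
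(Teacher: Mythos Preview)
Your outline imports two steps from the $\mathcal{L}(1,-1,0)$ proof that break when $\beta=0$, and both failures are consequential.

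First, the identity ${\mathcal LI}(x,e_n,e_1)=0$ is vacuous here. In $\mathcal{L}(1,0,0)$ we have $[e_1,e_{n-1}]=\beta e_n=0$ and $[e_{n-1},e_{n-1}]=\gamma e_n=0$, so $e_n\in Ann_r(N)$; since moreover $[x,e_n]+[e_n,x]\in Ann_r(R)$ and $[e_n,x]$ already lies in $Ann_r(R)$, one gets $[x,e_n]=0$ as well. Thus every term in ${\mathcal LI}(x,e_n,e_1)$ vanishes automatically and you extract no relations. The paper instead opens with ${\mathcal LI}(x,e_{n-1},e_1)=0$, which is the identity that actually kills $d_{n-1}$ and $d_2,\dots,d_{n-3}$.

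Second, and more seriously, your assertion ``since $\mathcal{R}_x$ is non-nilpotent we have $a_1\neq0$'' is false for this nilradical. The diagonal part of the derivation in Proposition~\ref{prop1} acts on $e_i$ (for $2\le i\le n-2$) with eigenvalue $ia_1+a_{n-1}$, so $a_1=0$, $a_{n-1}\neq0$ still gives a non-nilpotent $\mathcal{R}_{x|_N}$. The paper's proof therefore splits into two cases, $a_1\neq0$ and $a_1=0$, and it is precisely the second case that produces the family $R_{n+1}^6(1,0,0)$. Your scheme, normalising $a_1=1$ from the outset, never reaches it.

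Finally, your list of ``surviving parameters'' is off: within Case~1 the residual left-multiplication datum is $c_{n-1}$ (subject to $a_{n-1}(c_{n-1}-1)=0$), not $c_2$, and the five subcases there come from the special values $a_{n-1}\in\{0,\,4-n,\,3-n,\,2-n\}$ at which one of the denominators in the clearing substitutions vanishes, together with the generic case. The sixth algebra comes from Case~2. So the six-way branching is driven by $a_1$ versus $a_{n-1}$ and by exceptional values of $a_{n-1}$, not by $a_n$, $b_{n-3}$ or $c_2$ as you suggest.
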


\begin{proof} Let $R$ be a solvable Leibniz algebra with nilradical $\mathcal{L}(1,0,0)$ and let $\{e_1, e_2, \dots, e_{n},x\}$ be a basis of the algebra $R$. Then using the form of derivation in the Proposition \ref{prop1} for algebra  $\mathcal{L}(1,0,0)$ and since $e_1\notin Ann_{r}(R), \ e_2,\dots, e_{n-2},e_{n}\in Ann_{r}(R)$ we have the products in the algebra $R$:
$$\begin{cases}
[e_i,e_1]=e_{i+1}, \ [e_{n-1},e_1]=e_{n}+e_2, &1\leq i \leq n-3,\\[1mm]
[e_1,x]=\sum\limits_{t=1}^{n}a_te_t, \ [e_2,x]=(2a_1+a_{n-1})e_2+\sum\limits_{t=3}^{n-2}a_{t-1}e_t+a_{n-1}e_n,\\[1mm]
[e_i,x]=(ia_1+a_{n-1})e_i+\sum\limits_{t=i+1}^{n-2}a_{t-i+1}e_t, & 3\leq i\leq n-2,\\[1mm]
[e_{n-1},x]=\sum\limits_{t=2}^{n-4}a_te_t+b_{n-3}e_{n-3}+b_{n-2}e_{n-2}+(a_1+a_{n-1})e_{n-1}+b_ne_n, \\[1mm]
[e_n,x]=(b_{n-3}-a_{n-3})e_{n-2}+2a_1e_n,\ [x,e_1]=-a_1e_1+\sum\limits_{t=2}^{n}c_te_t, \\[1mm]
[x,e_{n-1}]=\sum\limits_{t=2}^{n}d_te_t, \ [x,x]=\sum\limits_{t=2}^{n}\alpha_te_t. \\[1mm]
\end{cases}$$

From the equality ${\mathcal LI}(x, e_{n-1}, e_{1})=0$, we derive the restrictions:
$$d_{n-1}=d_{t}=0,\quad 2\leq t\leq n-3.$$

Note that $(a_1,a_{n-1})\neq (0,0)$, otherwise $a_1 = a_{n-1} = 0$ and then we get a contradiction to the non-nilpotency of the derivation $\mathcal{R}_{x|_N}.$ Now we are going to discuss the possible cases of the parameters $a_1$ and $a_{n-1}$.

\emph{Case 1.} Let $a_1\neq 0$. Then by choosing $x'=\frac{1}{a_1}x$ we can assume $a_1=1$.  Again applying the basis transformation in the following form:
$$e_1'=e_1+\sum\limits_{t=2}^{n-2}A_{t}e_t+A_ne_n,\quad e_i'=e_i+\sum\limits_{t=i+1}^{n-2}A_{t-i+1}e_t,\ \ 2\leq i\leq n-2,$$
$$e_{n-1}'=e_{n-1}+\sum\limits_{t=2}^{n-2}A_{t}e_t+B_1e_n,\quad e_n'=e_n,\ \ x'=x,$$
with
$$A_2=-a_2,\ \ A_i=-\frac{1}{i-1}(a_i+\sum\limits_{t=2}^{i-1}A_ta_{i-t+1}),\ \ 3\leq i\leq n-2,$$
$$B_{1}=-(b_n+A_2a_{n-1}),\ \ A_n=-(A_2a_{n-1}-B_1a_{n-1}+a_n),$$
we obtain $a_i=a_n=b_n=0$ for $2\leq i\leq n-2.$

Changing the basis
$$x'=x-\sum\limits_{t=3}^{n-2}c_{t}e_{t-1}-c_ne_{n-1}-\frac{\alpha_{n}}{2}e_n,$$
yields $\alpha_n=c_n=c_t=0$ for $3\leq t\leq n-2$.

Considering the Leibniz identity, we obtain the following restrictions on structure
constants:
$$\begin{array}{llll}
{\mathcal LI}(x,e_{n-1},x)=0, & \Rightarrow & (a_{n-1}-1)d_n=0,\ (n-3)d_{n-2}+d_nb_{n-3}=0,&  \\[1mm]
{\mathcal LI}(x,e_{1},x)=0,& \Rightarrow &\alpha_{n-4}=c_{n-1}b_{n-3}, \ \alpha_{n-3}=c_{n-1}b_{n-2}-a_{n-1}d_{n-2},\ \alpha_{n-1}=c_2(1+a_{n-1}),&\\[1mm]
&&a_{n-1}(c_{n-1}-1)=0,\ c_2a_{n-1}-\alpha_{n-1}=a_{n-1}d_n,\ \alpha_{t}=0, \ 2\leq t\leq n-5,&\\[1mm]
{\mathcal LI}(x,x,e_{1})=0,& \Rightarrow & d_n=-c_2,\ (a_{n-1}-1)c_2=0,\ (n-3)d_{n-2}-c_2b_{n-3}=0,&\\[1mm]
&& c_2(a_{n-1}+c_{n-1})=0, \ \alpha_{n-3}=c_{n-1}(b_{n-2}+d_{n-2}),\  \alpha_{n-1}=2c_2. &  \\[1mm]
\end{array}$$

If $a_{n-1}\neq 1$, then we get $c_2=0$.

Let $a_{n-1}=1$. Then we have $c_{n-1}=1$ and this implies $c_2=0$. So it is always $c_2=0$. Therefor from the above restrictions we obtain $d_n=d_{n-2}=\alpha_{n-1}=0$.

Thus, the table of multiplications of the algebra $R$ has form:
{\small\begin{equation}\label{eq222}\begin{cases}
[e_i,e_1]=e_{i+1}, \ [e_{n-1},e_1]=e_{n}+e_2,&1\leq i \leq n-3,\\[1mm]
[e_1,x]=e_1+a_{n-1}e_{n-1}, \ [e_2,x]=(2+a_{n-1})e_2+a_{n-1}e_n,\ [e_i,x]=(i+a_{n-1})e_i, & 3\leq i\leq n-2,\\[1mm]
[e_{n-1},x]=b_{n-3}e_{n-3}+b_{n-2}e_{n-2}+(1+a_{n-1})e_{n-1}, \ [e_n,x]=b_{n-3}e_{n-2}+2e_n,\\[1mm] [x,e_1]=-e_1+c_{n-1}e_{n-1}, \ [x,x]=c_{n-1}b_{n-3}e_{n-4}+c_{n-1}b_{n-2}e_{n-3}+\alpha_{n-2}e_{n-2}, \\[1mm]
\end{cases}\end{equation}}
where $a_{n-1}(c_{n-1}-1)=0.$

\emph{Case 1.1.} Let $a_{n-1}\neq 4-n,3-n,2-n,0$. Then we get $c_{n-1}=1$ and taking the change of elements $\{e_1, e_2, e_{n-1}, e_n, x\}$ in (\ref{eq222}) as follows:

$$e_1'=e_1-\frac{a_{n-1}b_{n-3}}{(n-4)(n-4+a_{n-1})}e_{n-3}-\frac{a_{n-1}b_{n-2}}{(n-3)(n-3+a_{n-1})}e_{n-2},\quad e_2'=e_2-\frac{a_{n-1}b_{n-3}}{(n-4)(n-4+a_{n-1})}e_{n-2},$$
$$e_{n-1}'=e_{n-1}-\frac{b_{n-3}}{n-4}e_{n-3}-\frac{b_{n-2}}{n-3}e_{n-2},\quad e_n'=e_n-\frac{b_{n-3}}{n-4+a_{n-1}}e_{n-2},$$
$$x'=x-\frac{b_{n-3}}{n-4+a_{n-1}}e_{n-4}-\frac{b_{n-2}}{n-3+a_{n-1}}e_{n-3}-\frac{\alpha_{n-2}}{n-2+a_{n-1}}e_{n-2},$$
we can assume that $b_{n-3}=b_{n-2}=\alpha_{n-2}=0.$ So, we obtain the family of algebras $R_{n+1}^1(1,0,0)$, where $a_{n-1}\notin\{4-n,3-n,2-n,0\}.$

\emph{Case 1.2.} Let $a_{n-1}=0$.  Then setting
$$e_{n-1}'=e_{n-1}-\frac{b_{n-3}}{n-4}e_{n-3}-\frac{b_{n-2}}{n-3}e_{n-2},\quad e_n'=e_n-\frac{b_{n-3}}{n-4}e_{n-2},$$
$$x'=x-\frac{c_{n-1}b_{n-3}}{n-4}e_{n-4}-\frac{c_{n-1}b_{n-2}}{n-3}e_{n-3}-\frac{\alpha_{n-2}}{n-2}e_{n-2},$$
in (\ref{eq222})  one can get $a_n=b_{n-3}=b_{n-2}=\alpha_{n-2}=\alpha_{n}=0.$

If $c_{n-1}=1$, then we have \emph{Case 1.1} with $a_{n-1}=0$.

If $c_{n-1}\neq1$, then we have the algebra $R_{n+1}^2(1,0,0)$, where $c_{n-1}\in \mathbb{C}\setminus\{1\}$.

\emph{Case 1.3.} Let $a_{n-1}=4-n$. Then we get $c_{n-1}=1$ and applying the transformation
$$e_1'=e_1-\frac{(4-n)b_{n-2}}{n-3}e_{n-2},\quad e_{n-1}'=e_{n-1}-\frac{b_{n-2}}{n-3}e_{n-2},$$
$$x'=x-b_{n-2}e_{n-3}-\frac{\alpha_{n-2}}{2}e_{n-2},$$
we can assume $b_{n-2}=\alpha_{n-2}=0.$ If $b_{n-3}=0,$ then we have case \emph{Case 1.1} with $a_{n-1}=4-n$. If $b_{n-3}\neq0,$ then applying the change of basis $e'_i=A^ie_i, (1\leq i\leq n-2), e'_{n-1}=Ae_{n-1}, e'_{n}=A^2e_{n},$ we derive $b_{n-3}=1$ and obtain the algebra $R_{n+1}^3(1,0,0),$ where $A=\sqrt[n-4]{b_{n-3}}.$

\emph{Case 1.4.} Let $a_{n-1}=3-n$. Then $c_{n-1}=1$ and taking the change of basis elements $\{e_1, e_2, e_{n-1}, e_n, x\}$ in (\ref{eq222}) as follows
$$e_1'=e_1+\frac{(3-n)b_{n-3}}{n-4}e_{n-3},\quad e_2'=e_2+\frac{(3-n)b_{n-3}}{n-4}e_{n-2},$$
$$e_{n-1}'=e_{n-1}-\frac{b_{n-3}}{n-4}e_{n-3},\quad e_n'=e_n+b_{n-3}e_{n-2},$$
$$x'=x+b_{n-3}e_{n-4}-\alpha_{n-2}e_{n-2},$$
we can assume $b_{n-3}=\alpha_{n-2}=0.$ If $b_{n-2}=0,$ then we have case \emph{Case 1.1} with $a_{n-1}=3-n$. If $b_{n-2}\neq0,$ then applying $e'_i=A^ie_i\ (1\leq i\leq n-2),\  e'_{n-1}=Ae_{n-1},\  e'_{n}=A^2e_{n},$ we get $b_{n-2}=1$ and derive Leibniz algebra $R_{n+1}^4(1,0,0),$ where $A=\sqrt[n-3]{b_{n-2}}.$

\emph{Case 1.5.} Let $a_{n-1}=2-n$. Then we get $c_{n-1}=1$ and  applying the transformation
$$e_1'=e_1+\frac{(2-n)b_{n-3}}{2(n-4)}e_{n-3}+\frac{(2-n)b_{n-2}}{n-3}e_{n-2},\quad e_2'=e_2+\frac{(2-n)b_{n-3}}{2(n-4)}e_{n-2},$$
$$e_{n-1}'=e_{n-1}-\frac{b_{n-3}}{n-4}e_{n-3}-\frac{b_{n-2}}{n-3}e_{n-2},\quad e_n'=e_n+\frac{b_{n-3}}{2}e_{n-2},$$
$$x'=x+\frac{b_{n-3}}{2}e_{n-4}+b_{n-2}e_{n-3},$$
we have $b_{n-3}=b_{n-2}=0.$ If $\alpha_{n-2}=0,$ then we have case \emph{Case 1.1} with $a_{n-1}=2-n$. If $\alpha_{n-2}\neq0,$ then putting $e'_i=A^ie_i\ (1\leq i\leq n-2),\  e'_{n-1}=Ae_{n-1},\  e'_{n}=A^2e_{n},$ we get $\alpha_{n-2}=1$ and obtain algebra $R_{n+1}^5(1,0,0),$ where $A=\sqrt[n-2]{\alpha_{n-2}}.$

\emph{Case 2.} Let $a_1=0$. Then $a_{n-1}\neq 0$ and by choosing $x'=\frac{1}{a_{n-1}}x$ we can assume $a_{n-1}=1$.  Taking the change $e_{n-1}'=e_{n-1}+a_ne_n,\ e_n'=e_n,\ x'=x-(b_n-a_n)e_{1},$ we obtain $b_n=a_n=0.$

If again making a change
$$x'=x-\sum\limits_{t=2}^{n-2}c_{t}e_{t-1}-c_ne_{n-1}-\alpha_{n-2}e_{n-2},$$
we derive $\alpha_{n-2}=c_n=c_t=0$ for $2\leq t\leq n-2$.

Considering the Leibniz identity, we obtain the following restrictions on structure
constants:
$$\begin{array}{llll}
{\mathcal LI}(x,e_{n-1},x)=0, & \Rightarrow & d_n=0,&  \\[1mm]
{\mathcal LI}(x,e_{1},x)=0,& \Rightarrow &c_{n-1}=0,\ \alpha_{n-1}=0,\ \alpha_{n-3}=d_{n-2}, \ \alpha_{t}=0, \ \ 2\leq t\leq n-4,&\\[1mm]
{\mathcal LI}(x,x,e_{1})=0,& \Rightarrow & d_{n-2}=0.&\\[1mm]
\end{array}$$

Thus, we obtain the family of algebras $R_{n+1}^6(1,0,0)$.

\end{proof}

Similar to Theorems \ref{thm1} and \ref{thm2} we give the descriptions up to isomorphism of solvable Leibniz algebras with nilradicals $\mathcal{G}(1,1,0), \ \mathcal{G}(1,2,1)$ and one-dimensional complementary space of the nilradical.

From Proposition \ref{prop2} we obtain an arbitrary $d$ derivations of algebras $\mathcal{G}(1,1,0)$ and  $\mathcal{G}(1,2,1)$ have the following form, respectively:

 for the algebra $\mathcal{G}(1,1,0)$
$$\begin{cases}
d(e_1)=\sum\limits_{t=1}^na_{t}e_t, \ \ a_{3}=b_{3}-a_{1},\\[1mm]
d(e_2)=(a_{1}+b_{3})e_2, \\[1mm]
d(e_3)=\sum\limits_{t=2}^{n}b_{t}e_t, \ \ b_{2k+1}=0, \ \ 2\leq k\leq \frac{n-3}{2}, \\[1mm]
d(e_4)=(a_{1}+b_{3})e_4+\sum\limits_{t=5}^{n-1}b_{t-1}e_t+(b_{n-1}-a_{n-1})e_{n},\\[1mm]
d(e_i)=((i-3)a_{1}+b_{3})e_i+\sum\limits_{t=i+1}^{n-1}b_{t-i+3}e_t+(b_{n-i+3}-(-1)^ia_{n-i+3})e_{n},\ 5\leq i\leq n-1,\\[1mm]
d(e_n)=((n-4)a_{1}+2b_{3})e_{n}, \\[1mm]
\end{cases}$$

for the algebra $\mathcal{G}(1,2,1)$
$$\begin{cases}
d(e_1)=\sum\limits_{t=1}^na_{t}e_t, \ \ a_{3}=b_{3}-a_{1},\\[1mm]
d(e_2)=2b_{3}e_2, \\[1mm]
d(e_3)=\sum\limits_{t=2}^{n}b_{t}e_t, \ \ b_{2k+1}=0, \ \ 2\leq k\leq \frac{n-3}{2}, \\[1mm]
d(e_4)=(b_{3}-a_{1})e_2+(a_{1}+b_{3})e_4+\sum\limits_{t=5}^{n-1}b_{t-1}e_t+(b_{n-1}-a_{n-1})e_{n},\\[1mm]
d(e_i)=((i-3)a_{1}+b_{3})e_i+\sum\limits_{t=i+1}^{n-1}b_{t-i+3}e_t+(b_{n-i+3}-(-1)^ia_{n-i+3})e_{n},\ 5\leq i\leq n-1,\\[1mm]
d(e_n)=((n-4)a_{1}+2b_{3})e_{n}, \\[1mm]
\end{cases}$$

\begin{thm}\label{thm3} An arbitrary solvable Leibniz algebra with a codimension one nilradical $\mathcal{G}(1,1,0)$ is isomorphic to one of the following pairwise non-isomorphic algebras:
$$H_{n+1}^1(1,1,0), \ H_{n+1}^2(1,1,0), \ H_{n+1}^3(1,1,0),\ H_{n+1}^4(1,1,0), \ H_{n+1}^5(1,1,0), H_{n+1}^6(1,1,0).$$
\end{thm}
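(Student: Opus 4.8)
The plan is to follow verbatim the strategy of the proofs of Theorems \ref{thm1} and \ref{thm2}, adapted to the nilradical $N=\mathcal{G}(1,1,0)$ (recall that, since $\alpha=1$, this forces $n$ odd). Fix a basis $\{e_1,\dots,e_n,x\}$ of $R$. By Theorem \ref{nilr} the operator $\mathcal{R}_{x|_N}$ is a non-nilpotent derivation of $N$, so its action is prescribed by the specialized form of Proposition \ref{prop2} for $\mathcal{G}(1,1,0)$ displayed just before the theorem; this immediately fixes every product $[e_i,x]=d(e_i)$ in terms of the entries $a_t,b_t$. It remains to determine the left products $[x,e_i]$ and $[x,x]$. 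Here I would use that $e_2,e_n\in Ann_r(N)$, that $[x,x]\in Ann_r(R)$, and that $[x,e_i]+[e_i,x]\in Ann_r(R)$, writing the generator products $[x,e_1]$ and $[x,e_3]$ with generic coefficients $c_t,d_t$ and expanding $[x,x]$ in the annihilator directions with coefficients $\alpha_t$. This yields an explicit multiplication table for $R$ depending on the free constants $a_t,b_t,c_t,d_t,\alpha_t$.

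Next I would impose the Leibniz identities on the triples that involve $x$, primarily $\mathcal{LI}(x,e_3,e_1)$ and $\mathcal{LI}(x,e_{n-1},e_1)$ first (to kill most of the $d_t$), and then $\mathcal{LI}(x,e_1,x)$, $\mathcal{LI}(x,e_3,x)$, $\mathcal{LI}(x,x,e_1)$ to relate the remaining constants, exactly as the three identity computations did in the proof of Theorem \ref{thm2}. The essential observation organizing everything is that the spectrum of $\mathcal{R}_{x|_N}$ is governed by the single pair $(a_1,b_3)$: the eigenvalue is $a_1$ on $e_1$, $a_1+b_3$ on $e_2$, $b_3$ on $e_3$, $(i-3)a_1+b_3$ on $e_i$ for $4\le i\le n-1$, and $(n-4)a_1+2b_3$ on $e_n$. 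Since $\mathcal{R}_{x|_N}$ is non-nilpotent, $(a_1,b_3)\neq(0,0)$, which splits the argument into the two regimes $a_1\neq0$ and $a_1=0$.

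When $a_1\neq0$ I would rescale $x$ to set $a_1=1$, then apply a shearing change of basis $e_i'=e_i+\sum_{t>i}A_{t-i+1}e_t$ (of the type used in (\ref{eq222})) together with a shift $x'=x-\sum_t c_t e_{t-1}-\cdots$ to annihilate the off-diagonal coordinates $a_t,b_t$ and the scalars $c_t,\alpha_t$, leaving only a handful of top entries (the analogues of $b_{n-3},b_{n-2},\alpha_{n-2}$). When $a_1=0$ one has $b_3\neq0$; after setting $b_3=1$ the same reductions collapse the table to a single family $H_{n+1}^6(1,1,0)$. This diagonalization and clearing of off-diagonal data is the routine but lengthy bookkeeping part of the argument.

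The crux is the case analysis inside the regime $a_1=1$. The surviving top parameters can be removed by a final change of basis whose coefficients carry denominators built from eigenvalue differences, namely expressions of the form $(i-3)+b_3$ coming from the spectrum above; these denominators vanish precisely for a short list of resonant values of $b_3$ (the exact analogues of $a_{n-1}\in\{4-n,3-n,2-n,0\}$ in the proof of Theorem \ref{thm2}). At each resonant value one top coefficient becomes a genuine modulus that can only be normalized to $0$ or $1$ by a diagonal rescaling $e_i'=A^i e_i$, $e_{n-1}'=Ae_{n-1}$, $e_n'=A^2 e_n$, producing a distinct algebra. Enumerating these resonances correctly, and verifying through the eigenvalue data that they give exactly five pairwise non-isomorphic algebras for $a_1\neq0$ plus the one family for $a_1=0$, is the main obstacle; it yields the six algebras $H_{n+1}^1(1,1,0),\dots,H_{n+1}^6(1,1,0)$. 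Pairwise non-isomorphism I would settle by comparing the induced spectra of $\mathcal{R}_x$ together with the normalized top-parameter invariants, which are preserved under the admissible basis changes.
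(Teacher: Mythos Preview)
Your outline is essentially the paper's own proof: same setup via the specialized derivation of Proposition~\ref{prop2}, same split $(a_1\neq0)$ versus $(a_1=0,\ b_3\neq0)$, same clearing of off-diagonal data followed by a resonance analysis in the parameter governing the spectrum, yielding $H_{n+1}^1,\dots,H_{n+1}^5$ in the first regime and $H_{n+1}^6$ in the second. Two small corrections: the diagonal rescaling adapted to the $\mathcal{G}$-grading is $e_1'=Ae_1,\ e_2'=A^2e_2,\ e_i'=A^{i-2}e_i$ for $3\le i\le n$ (not the $\mathcal{L}$-type $e_i'=A^ie_i$ you quoted), and after normalizing $a_1=1$ the resonant values of $a_3=b_3-1$ are precisely $-2,\ 3-n,\ \tfrac{2-n}{2},\ \tfrac{3-n}{2}$, coming from the denominators $2+a_3,\ n-3+a_3,\ n-2+2a_3,\ n-3+2a_3$.
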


\begin{proof} Let $R$ be a solvable Leibniz algebra with nilradical $\mathcal{G}(1,1,0)$ and let $\{e_1, e_2, \dots, e_{n},x\}$ be a basis of the algebra $R$. Then using the above form of derivation for algebra  $\mathcal{G}(1,1,0)$ and since $e_1,e_3,\dots, e_{n-1}\not \in Ann_{r}(R),\ e_2  \in Ann_{r}(R)$ we have the products in the algebra $R$:
$$\begin{cases}
[e_1,e_1]=e_{2},\ [e_i,e_1]=e_{i+1},& 3\leq i \leq n-1,\\[1mm]
[e_{1},e_3]=-e_{4}+e_2, \ [e_1,e_i]=-e_{i+1},& 4\leq i \leq n-1,\\[1mm]
[e_i,e_{n+2-i}]=(-1)^{i}e_{n},& 3\leq i \leq n-1,\\[1mm]
[e_1,x]=\sum\limits_{t=1}^na_{t}e_t, \ [e_2,x]=(2a_{1}+a_{3})e_2, \ [e_3,x]=b_2e_2+(a_{3}+a_{1})e_3+\sum\limits_{t=4}^{n}b_{t}e_t, \\[1mm]
[e_4,x]=(2a_{1}+a_{3})e_4+\sum\limits_{t=5}^{n-1}b_{t-1}e_t+(b_{n-1}-a_{n-1})e_{n}, \\[1mm]
[e_i,x]=((i-2)a_{1}+a_{3})e_i+\sum\limits_{t=i+1}^{n-1}b_{t-i+3}e_t+(b_{n-i+3}-(-1)^ia_{n-i+3})e_{n},& 5\leq i\leq n-1,\\[1mm]
[e_n,x]=((n-2)a_{1}+2a_{3})e_{n},\ [x,e_1]=-a_1e_1+c_2e_2-\sum\limits_{t=3}^{n-1}a_{t}e_t+c_ne_n, \\[1mm]
[x,e_{3}]=d_2e_2-(a_{3}+a_{1})e_3-\sum\limits_{t=4}^{n-1}b_{t}e_t+d_ne_n, \ b_{2k+1}=0, & 2\leq k\leq \frac{n-3}{2}, \\[1mm]
[x,x]=\alpha_2e_2+\alpha_ne_n. \\[1mm]
\end{cases}$$

From $[x,e_{i+1}]=[x,[e_i,e_1]]=[[x,e_i],e_1]-[[x,e_1],e_i]$ for $3\leq i\leq n-1$, we have
$$\begin{cases}
[x,e_4]=a_1e_2-(2a_{1}+a_{3})e_4-\sum\limits_{t=5}^{n-1}b_{t-1}e_t-(b_{n-1}-a_{n-1})e_{n},\\[1mm]
[x,e_i]=-((i-2)a_{1}+a_{3})e_i-\sum\limits_{t=i+1}^{n-1}b_{t-i+3}e_t-(b_{n-i+3}-(-1)^ia_{n-i+3})e_{n},& 5\leq i\leq n-1,\\[1mm]
[x,e_n]=-((n-2)a_{1}+2a_{3})e_{n}.\end{cases}$$

Consider the following possible cases.

\emph{Case 1.} Let $a_1\neq 0$. Then by choosing $x'=\frac{1}{a_1}x$ we can assume $a_1=1.$ Applying the basis transformation in the following form:

$$e_3'=e_3+A_2e_2+\sum\limits_{t=4}^{n-1}A_{t}e_t,\ \ e_i'=e_i+\sum\limits_{t=i+1}^{n}A_{t-i+3}e_t,\ \ 4\leq i\leq n,$$
with
$$A_2=-B_2,\quad A_4=-b_4,\quad A_i=-\frac{1}{i-3}(b_i+\sum\limits_{t=4}^{i-1}A_tb_{i-t+3}),\ \ 5\leq i\leq n-1,$$
we obtain $b_2=b_i=0$ for $4\leq i\leq n-1.$

Taking the change $x'=x+\sum\limits_{t=4}^{n}a_{t}e_{t-1},$ we derive $a_t=0$ for $4\leq t\leq n$.

Now considering the Leibniz identity, we obtain the following restrictions on structure
constants:
$$\begin{array}{llll}
{\mathcal LI}(x,e_{1},x)=0, & \Rightarrow & c_{2}(1+a_3)=a_2+a_3d_2, \ c_n(n-3+2a_3)=a_3(b_n+d_n),&  \\[1mm]
{\mathcal LI}(x,x,e_{1})=0,& \Rightarrow &c_{2}(1+a_3)=a_2+a_3d_2, \ c_n=-a_3(b_n+d_n),&  \\[1mm]
{\mathcal LI}(x,x,e_{3})=0,& \Rightarrow & d_{2}=0, \ (1+a_3)(b_n+d_n)=0,&  \\[1mm]
{\mathcal LI}(x,e_{3},x)=0,& \Rightarrow & (n-2+2a_3)(b_n+d_n)=0, & \\[1mm]
{\mathcal LI}(x,x,x)=0,& \Rightarrow & \alpha_n(n-2+2a_{3})=0.&  \\[1mm]
\end{array}$$

By using the above restrictions we derive $d_n=-b_n$ and $c_n=0$.  If making change of basis element $e_1'=e_1-c_2e_2$ we can assume $c_2=0$ and the table of multiplications of algebra $R$ has the following form:
$$\begin{cases}
[e_1,e_1]=e_{2},\ [e_i,e_1]=e_{i+1},& 3\leq i \leq n-1,\\[1mm]
[e_{1},e_3]=-e_{4}+e_2, \ [e_1,e_i]=-e_{i+1},& 4\leq i \leq n-1,\\[1mm]
[e_i,e_{n+2-i}]=(-1)^{i}e_{n},& 3\leq i \leq n-1,\\[1mm]
[e_1,x]=e_1+a_3e_3, \ [e_2,x]=(2+a_{3})e_2, \\[1mm]
[e_3,x]=(1+a_{3})e_3+b_{n}e_n, \ [e_i,x]=(i-2+a_{3})e_i,& 4\leq i\leq n-1,\\[1mm]
[e_n,x]=(n-2+2a_{3})e_{n},\ [x,e_1]=-e_1-a_3e_3, \\[1mm]
[x,e_{3}]=-(1+a_{3})e_3-b_ne_n,  \\[1mm]
[x,e_4]=e_2-(2+a_{3})e_4,\ [x,e_i]=-(i-2+a_{3})e_i,& 5\leq i\leq n-1,\\[1mm]
[x,e_n]=-(n-2+2a_{3})e_{n},\ [x,x]=\alpha_2e_2+\alpha_ne_n, \\[1mm]
\end{cases}$$
where $\alpha_n(n-2+2a_{3})=0.$

\emph{Case 1.1.} Let $a_3\neq -2,\ 3-n, \frac{2-n}{2},\frac{3-n}{2}$. Then we have $\alpha_n=0$ and setting
$$e_1'=e_1-\frac{a_3b_n}{(n-3+a_3)(n-3+2a_3)}e_n,\quad e_3'=e_3-\frac{b_n}{n-3+a_3}e_n,\quad x'=x-\frac{\alpha_2}{2+a_3}e_2,$$
we derive $b_2=\alpha_2=0$. Thus, we obtain the algebra $H_{n+1}^1(1,1,0)$.

\emph{Case 1.2} Let $a_3=-2$. Then $\alpha_n=0$  and putting
$$e_1'=e_1+\frac{b_n}{(n-5)^2}e_n,\quad e_3'=e_3-\frac{b_n}{n-5}e_n,$$
we can assume $b_2=0$. If $\alpha_{2}=0,$ then we have algebra in the case \emph{Case 1.1} with $a_{3}=-2$. If $\alpha_{2}\neq0,$ then applying the change of basis $e'_1=Ae_1,\ e'_2=A^2e_2,\ e'_i=A^{i-2}e_i, (3\leq i\leq n),$ we derive $\alpha_{2}=1$ and obtain algebra $H_{n+1}^2(1,1,0),$ where $A=\sqrt{\alpha_2}.$

\emph{Case 1.3.} Let $a_3=3-n$. Then we get $\alpha_n=0$ and putting $x'=x-\frac{\alpha_2}{5-n}e_2$ we have $\alpha_2=0$.
If $b_{n}=0,$ then we have case \emph{Case 1.1} with $a_{3}=3-n$. If $b_{n}\neq0,$ then putting $e'_1=Ae_1,\ e'_2=A^2e_2,\ e'_i=A^{i-2}e_i, (3\leq i\leq n),$ we get $b_{n}=1$ and the algebra $H_{n+1}^3(1,1,0),$ where $A=\sqrt[n-3]{b_{n}}.$

\emph{Case 1.4.}  Let $a_3=\frac{2-n}{2}$. Then applying the transformation
$$e_1'=e_1-\frac{(2-n)b_n}{4-n}e_n,\quad e_3'=e_3+\frac{2b_n}{4-n}e_n, \quad x'=x-\frac{2\alpha_2}{6-n}e_2,$$
we obtain $b_n=\alpha_2=0$.  If $\alpha_{n}=0,$ then we have algebra in the case \emph{Case 1.1} with $a_{3}=\frac{2-n}{2}$. If $\alpha_{n}\neq0,$ then applying $e'_1=Ae_1,\ e'_2=A^2e_2,\ e'_i=A^{i-2}e_i, (3\leq i\leq n),$ we get $\alpha_{n}=1$ and derive Leibniz algebra $H_{n+1}^4(1,1,0),$ where $A=\sqrt[n-2]{\alpha_{n}}.$

\emph{Case 1.5.} Let $a_3=\frac{3-n}{2}$. Then we have $\alpha_n=0$ and putting $x'=x-\frac{2\alpha_2}{7-n}e_2$ we can assume $\alpha_2=0$. If $b_{n}=0,$ then we have case \emph{Case 1.1} with $a_{3}=\frac{3-n}{2}$. If $b_{n}\neq0,$ then putting $e'_1=Ae_1,\ e'_2=A^2e_2,\ e'_i=A^{i-2}e_i, (3\leq i\leq n),$ we get $b_{n}=1$ and obtain algebra $H_{n+1}^5(1,1,0),$ where $A=\sqrt[n-3]{b_{n}}.$

\emph{Case 2.} Let  $a_1=0$. Then we can assume $a_3=1.$  By taking the change of basis element
$$x'=x-a_2e_1-\alpha_2e_2+\sum\limits_{t=4}^{n}a_{t}e_{t-1},$$
we derive $a_2=0,\ \alpha_2=0,\  a_t=0,\ 4\leq t\leq n$. In this case we have that $e_n\not \in Ann_r(R).$

From equalities ${\mathcal LI}(x,e_{1},x)={\mathcal LI}(x,x,e_{3})={\mathcal LI}(x,e_{3},x)=0$  we obtain
$$c_{2}=b_2+d_2, d_{2}=b_2=0.$$

Thus, we obtain the family of algebras $H_{n+1}^6(1,1,0)$, which completes the proof of theorem.
\end{proof}

\begin{thm} An arbitrary solvable Leibniz algebra with a codimension one nilradical $\mathcal{G}(1,2,1)$ is isomorphic to one of the following pairwise non-isomorphic algebras:
$$H_{n+1}^1(1,2,1),\ H_{n+1}^2(1,2,1),\ H_{n+1}^3(1,2,1),\ H_{n+1}^4(1,2,1), \ H_{n+1}^5(1,2,1), \ H_{n+1}^6(1,2,1), \ H_{n+1}^7(1,2,1).$$
\end{thm}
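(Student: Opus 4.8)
The plan is to run the argument used for $\mathcal{G}(1,1,0)$ in Theorem \ref{thm3}, carefully tracking the contributions of $\beta=2$ (i.e.\ $[e_1,e_3]=-e_4+2e_2$) and $\gamma=1$ (i.e.\ $[e_3,e_3]=e_2$). First I fix a basis $\{e_1,\dots,e_n,x\}$ of $R$ and read the products $[e_i,x]=\mathcal{R}_x(e_i)$ off the derivation of $\mathcal{G}(1,2,1)$ recorded above, so that the free data are the constants $a_t,b_t$ subject to $a_3=b_3-a_1$, $a_{n-1}=0$, $b_{2k+1}=0$. Since $e_2\in Ann_r(R)$ while $e_1,e_3,\dots,e_{n-1}\notin Ann_r(R)$, the products $[x,e_1]$, $[x,e_3]$ and $[x,x]=\alpha_2e_2+\alpha_ne_n$ are the only free left products, and $[x,e_{i+1}]$ for $3\le i\le n-1$ is then forced by ${\mathcal LI}(x,e_i,e_1)=0$, exactly as in Theorem \ref{thm3}. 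The two numerical changes from that proof are that the eigenvalue of $\mathcal{R}_x$ on $e_2$ is now $2a_1+2a_3$ instead of $2a_1+a_3$ (the $\beta$-contribution, since $[e_2,x]=2b_3e_2$), and that $[e_4,x]$ carries an extra summand $a_3e_2$ (the $\gamma$-contribution); the eigenvalues on $e_1,e_3,\dots,e_n$ remain as before.

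Next I split according to the admissible non-nilpotent derivation into the principal case $a_1\ne0$ (rescaled to $a_1=1$) and the case $a_1=0$ (then $a_3\ne0$, rescaled to $a_3=1$). In the principal case I clear $b_2,b_4,\dots,b_{n-1}$ by a triangular change $e_3'=e_3+A_2e_2+\sum_{t\ge4}A_te_t$, $e_i'=e_i+\sum_tA_{t-i+3}e_t$, and remove $a_4,\dots,a_n$ by the shift $x'=x+\sum_{t=4}^{n}a_te_{t-1}$. Imposing ${\mathcal LI}(x,e_1,x)$, ${\mathcal LI}(x,x,e_1)$, ${\mathcal LI}(x,x,e_3)$, ${\mathcal LI}(x,e_3,x)$ and ${\mathcal LI}(x,x,x)=0$ then pins down the remaining constants; in particular the $e_2$-component of ${\mathcal LI}(x,x,e_3)=0$ reads $d_2(1+a_3)=0$ and that of ${\mathcal LI}(x,e_1,x)=0$ reads $c_2(1+2a_3)=a_3d_2$. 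For generic $a_3$ these give $d_2=c_2=0$ and leave an algebra depending on $a_3$ and residual constants $b_n,\alpha_2,\alpha_n$, which is $H_{n+1}^1(1,2,1)$. The three values $a_3\in\{3-n,\ \tfrac{3-n}{2},\ \tfrac{2-n}{2}\}$ — the zeros of the denominators $n-3+a_3$, $n-3+2a_3$ and $n-2+2a_3$ occurring when one clears $b_n$ (through $e_3'$ and the induced term in $e_1'$) and $\alpha_n$ (through $x'$) — must each be treated separately, and in each a final scaling $e_i'=A^{\,i-2}e_i$, $e_2'=A^2e_2$ normalizes the surviving constant to $1$, giving $H_{n+1}^2(1,2,1)$, $H_{n+1}^3(1,2,1)$, $H_{n+1}^4(1,2,1)$. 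In the case $a_1=0$ a shift of $x$ removes $a_2,\alpha_2,a_4,\dots,a_n$, the remaining Leibniz identities collapse the surviving constants, and one obtains $H_{n+1}^7(1,2,1)$.

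The decisive and most delicate case — the genuine point of departure from $\mathcal{G}(1,1,0)$ — is $a_3=-1$, where the $e_2$-eigenvalue $2+2a_3$ vanishes. There three simplifications used above fail simultaneously: the relation $d_2(1+a_3)=0$ becomes vacuous so $d_2$ survives (and ${\mathcal LI}(x,e_1,x)=0$ then forces $c_2=d_2$); the $\gamma$-induced term $a_3e_2$ in $[e_4,x]$ can no longer be absorbed into $e_4$; and $\alpha_2$ can no longer be removed by shifting $x$. Because none of these $e_2$-coefficients is governed by a nonzero eigenvalue difference, the surviving parameter $d_2$ cannot be scaled away as in the other cases, and I expect it to bifurcate — $d_2=0$ versus $d_2$ normalized to $1$ — producing the two normal forms $H_{n+1}^5(1,2,1)$ and $H_{n+1}^6(1,2,1)$. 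This single extra degeneracy is exactly what raises the count from the six families of Theorem \ref{thm3} to seven here. The remaining work is to verify that the seven algebras are pairwise non-isomorphic, which should follow by comparing the spectrum of $\mathcal{R}_x$ (i.e.\ the value of $a_3$) together with which of $b_n,\alpha_2,\alpha_n,d_2$ is normalized to $1$ in each reduction, and to confirm that the case list is exhaustive.
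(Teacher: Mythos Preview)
Your overall strategy matches the paper's: it says only that ``the proof is carried out similarly to the proof of Theorem \ref{thm3}'', and you correctly adapt that argument, tracking how $\beta=2$ shifts the $e_2$-eigenvalue to $2(1+a_3)$ and how $\gamma=1$ introduces the extra $a_3e_2$ in $[e_4,x]$. The special values $a_3\in\{3-n,\tfrac{3-n}{2},\tfrac{2-n}{2}\}$ and the case $a_1=0$ are right, and your identification of $a_3=-1$ as degenerate (the $e_2$-eigenvalue vanishes) is also correct.

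However, your case list is not complete and your description of $a_3=-1$ is wrong. First, you overlook $a_3=-\tfrac{1}{2}$: your own relation $c_2(1+2a_3)=a_3d_2$ becomes vacuous there (since $d_2=0$ from $(1+a_3)d_2=0$), so $c_2$ survives and cannot be scaled away. This is exactly $H_{n+1}^3(1,2,1)$ in the appendix, where $[x,e_1]=-e_1+e_2+\tfrac{1}{2}e_3$. Second, at $a_3=-1$ your triangular change $e_3'=e_3+A_2e_2$ requires $A_2=-b_2/(1+a_3)$, which fails; so $b_2$ survives too, alongside $d_2$ and $\alpha_2$. The result is not a bifurcation into two rigid algebras but a single three-parameter family (with $(b,d,\alpha)\neq(0,0,0)$), namely $H_{n+1}^2(1,2,1)$. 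Consequently your assignment of labels is shifted: the values $3-n$, $\tfrac{3-n}{2}$, $\tfrac{2-n}{2}$ correspond to $H_{n+1}^4,H_{n+1}^5,H_{n+1}^6$, not $H_{n+1}^2,H_{n+1}^3,H_{n+1}^4$. The ``extra'' family relative to $\mathcal{G}(1,1,0)$ thus comes from the new degeneracy at $a_3=-\tfrac12$ (arising from the factor $1+2a_3$, itself a consequence of $\beta=2$), not from a splitting at $a_3=-1$.
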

\begin{proof} The proof of this theorem is carried out similarly to the proof of Theorem \ref{thm3}.
\end{proof}

\section{Appendix: The list of the algebras}

{\footnotesize
\begin{longtable}{l|l}
 \hline
$R_{n+1}^1(0,\beta,0)$&$[e_1,x]=e_1+e_{n-1},\ [e_2,x]=2e_2+(1+\beta)e_n,\ [e_i,x]=ie_i,\ 3\leq i\leq n-2,\ [e_{n-1},x]=e_{n-1},\ [e_n,x]=2e_{n},$\\[1mm]
& $[x,e_1]=-e_1-e_{n-1},\ [x,e_{n-1}]=-e_{n-1},\ [x,e_{n}]=(\beta-1)e_n,\ \beta\in\{-1,1\},$\\[1mm]
 \hline
$R_{n+1}^2(0,-1,0)$&$[e_1,x]=e_1+e_n,\ [e_n,x]=e_{n},\ [e_i,x]=ie_i,\ 2\leq i\leq n-2,\ [x,e_1]=-e_1,\ [x,e_{n}]=-e_n,$\\[1mm]
 \hline
$R_{n+1}^3(0,-1,0)$&$[e_i,x]=ie_i,\ 1\leq i\leq n-2,\ [e_{n-1},x]=-e_{n-1},\  [x,e_1]=-e_1,\ [x,e_{n-1}]=e_{n-1},\ [x,x]=e_n,$\\[1mm]
 \hline
$R_{n+1}^4(0,-1,0)$&$[e_i,x]=ie_i,\ 1\leq i\leq n-2,\ [e_{n-1},x]=\alpha e_{n-1},\ [e_n,x]=(1+\alpha)e_{n},$\\[1mm]
&$[x,e_1]=-e_1,\ [x,e_{n-1}]=-\alpha e_{n-1},\ [x,e_{n}]=-(\alpha+1)e_n,$ \\[1mm]
 \hline
$R_{n+1}^5(0,\beta,0)$&$[e_i,x]=ie_i,\ 1\leq i\leq n-2,\  [e_{n-1},x]=\beta e_{n-1},\ [e_n,x]=(1+\beta)e_{n},$\\[1mm]
&$[x,e_1]=-e_1,\ [x,e_{n-1}]=-\beta e_{n-1},\ \beta\notin\{-1,0,1\},$\\[1mm]
 \hline
$R_{n+1}^6(0,-1,0)$&$[e_i,x]=\sum\limits_{t=i+1}^{n-2}\alpha_{t-i+1}e_t,\ 1\leq i\leq n-3,\ [e_{n-1},x]=e_{n-1}+\alpha_{n-1}e_n,\ [e_n,x]=e_{n},$\\[1mm]
&$[x,e_{n-1}]=-e_{n-1}-\alpha_{n-1}e_n,\ [x,e_{n}]=-e_n,\ [x,x]=\alpha_{n}e_{n-2},$\\[1mm]
 \hline
$R_{n+1}^7(0,0,0)$&$[e_i,x]=ie_i,\ 1\leq i\leq n-2,\ [e_{n-1},x]=-e_{n-1},\ [x,e_1]=-e_1,\ [x,x]=e_n,$\\[1mm]
 \hline
$R_{n+1}^8(0,0,0)$&$[e_i,x]=ie_i,\ 1\leq i\leq n-2,\ [e_{n-1},x]=e_{n-3}+(n-3)e_{n-1},\ [e_n,x]=e_{n-2}+(n-2)e_{n},\ [x,e_1]=-e_1,$\\[1mm]
 \hline
$R_{n+1}^9(0,0,0)$&$[e_i,x]=ie_i,\ 1\leq i\leq n-2,\ [e_{n-1},x]=e_{n-2}+(n-2)e_{n-1},\ [e_n,x]=(n-1)e_{n},\ [x,e_1]=-e_1,$\\[1mm]
 \hline
$R_{n+1}^{10}(0,0,0)$&$[e_1,x]=e_1+e_{n},\ [e_i,x]=ie_i,\ 2\leq i\leq n-2,\ [e_n,x]=e_{n},\ \ [x,e_1]=-e_1,\ [x,x]=-e_{n-1},$\\[1mm]
 \hline
$R_{n+1}^{11}(0,0,0)$&$[e_i,x]=ie_i,\ 1\leq i\leq n-2,\ [e_{n-1},x]=\alpha e_{n-1},\ [e_n,x]=(1+\alpha)e_{n},\ [x,e_1]=-e_1,$\\[1mm]
 \hline
$R_{n+1}^{12}(0,0,0)$&$[e_i,x]=\sum\limits_{t=i+1}^{n-2}\alpha_{t-i+1}e_t,\ 1\leq i\leq n-3,\ [e_{n-1},x]=e_{n-1}+\alpha_{n-1}e_n,\ [e_n,x]=e_{n},\ [x,x]=\alpha_{n}e_{n-2},$\\[1mm]
 \hline
$R_{n+2}(0,\beta,0)$& $[e_i,x]=ie_i,\ 1\leq i\leq n-2,\ [e_n,x]=e_n,\ [x,e_1]=-e_1,\ [x,e_n]=\beta e_{n},$\\[1mm]
&$[e_{n-1},y]=e_{n-1},\ [e_n,y]=e_n,\ [y,e_{n-1}]=\beta e_{n-1},\ [y,e_n]=\beta e_{n},\ \beta\in\{-1,0\},$ \\[1mm]
\hline
$R_{n+1}^1(0,1,1)$ &$[e_1,x]=e_1-e_{n-1}+ae_n,\ [e_2,x]=2e_2-2e_n,\ [e_i,x]=ie_i,\ 3\leq i\leq n-2,$ \\[1mm]
&$[e_{n-1},x]=ae_{n},\ [x,e_1]=-e_1+e_{n-1},\ [x,x]=be_n,\ (a,b)\neq(0,0),$ \\[1mm]
\hline
$R_{n+1}^2(0,1,1)$ &$[e_1,x]=e_1+(a-1)e_{n-1},\ [e_2,x]=2e_2+2(a-1)e_n,\ [e_i,x]=ie_i,\ 3\leq i\leq n-2,$ \\[1mm]
&$[e_{n-1},x]=ae_{n-1},\ [e_{n},x]=2ae_{n},\ [x,e_1]=-e_1-(a-1)e_{n-1},\ [x,e_{n-1}]=-ae_{n-1},$\\[1mm]
\hline
$R_{n+1}^3(0,1,1)$ & $[e_1,x]=\sum\limits_{t=2}^{n-2}a_te_t+e_{n-1}-a_2e_n, \ [e_2,x]=\sum\limits_{t=3}^{n-2}a_{t-1}e_t+2e_{n},\ [e_i,x]=\sum\limits_{t=i+1}^{n-2}a_{t-i+1}e_t,\ 3\leq i\leq n-3,$\\[1mm]
&$[e_{n-1},x]=e_{n-1},\ [e_n,x]=2e_n, \ [x,e_{1}]=-e_{n-1},\ [x,e_{n-1}]=-e_{n-1},\ [x,x]=\alpha_{n-2}e_{n-2},$\\[1mm]
\hline
$R_{n+2}(0,1,1)$ &$[e_1,x]=e_1-e_{n-1},\ [e_2,x]=2e_2-2e_n,\ [e_i,x]=ie_i,\ 3\leq i\leq n-2, \ [x,e_1]=-e_1+e_{n-1},$\\[1mm]
 & $[e_1,y]=e_{n-1},\ [e_2,y]=2e_n,\ [e_{n-1},y]=e_{n-1},\ [e_n,y]=2e_n,\ [y,e_1]=-e_{n-1},\ [y,e_{n-1}]=-e_{n-1},$\\[1mm]
\hline
$R_{n+1}^1(1,-1,0)$&$[e_1,x]=e_1-e_{n-1},\ [e_i,x]=(i-1)e_i,\ 2\leq i\leq n-2,\ [e_n,x]=e_n,\ [x,e_1]=-e_1+e_{n-1},\ [x,e_{n}]=-e_n,$\\[1mm]
\hline
$R_{n+1}^2(1,-1,0)$&$[e_1,x]=e_1-e_{n-1},\ [e_i,x]=(i-1)e_i,\ 2\leq i\leq n-2,\ [e_n,x]=e_n,\ [x,e_1]=-e_1+e_2+e_{n-1},\ [x,e_{n}]=-e_n,$ \\[1mm]
\hline
$R_{n+1}^1(1,0,0)$& $[e_1,x]=e_1+ae_{n-1},\ [e_2,x]=(2+a)e_2+ae_n,\ [e_i,x]=(i+a)e_i,\ 3\leq i\leq n-2,$\\[1mm]
&$[e_{n-1},x]=(1+a)e_{n-1},\ [e_n,x]=2e_n,\ [x,e_1]=-e_1+e_{n-1},$\\[1mm]
\hline
$R_{n+1}^2(1,0,0)$& $[e_i,x]=ie_i,\ 1\leq i\leq n-2,\ [e_{n-1},x]=e_{n-1},\ [e_n,x]=2e_n,\ [x,e_1]=-e_1+ce_{n-1},\ c\neq1,$\\[1mm]
\hline
$R_{n+1}^3(1,0,0)$&$[e_1,x]=e_1+(4-n)e_{n-1},\ [e_2,x]=(6-n)e_2+(4-n)e_n,\ [e_i,x]=(i+4-n)e_i,\ 3\leq i\leq n-2,$\\[1mm]
&$[e_{n-1},x]=e_{n-3}+(5-n)e_{n-1},\ [e_n,x]=e_{n-2}+2e_n,\ [x,e_1]=-e_1+e_{n-1},\ [x,x]=e_{n-4},$ \\[1mm]
\hline
$R_{n+1}^4(1,0,0)$&$ [e_1,x]=e_1+(3-n)e_{n-1},\ [e_2,x]=(5-n)e_2+(3-n)e_n,\ [e_i,x]=(i+3-n)e_i,\ 3\leq i\leq n-2,$\\[1mm]
&$[e_{n-1},x]=e_{n-2}+(4-n)e_{n-1},\ [e_n,x]=2e_n,\ [x,e_1]=-e_1+e_{n-1},\ [x,x]=e_{n-3},$ \\[1mm]
\hline
$R_{n+1}^5(1,0,0)$&$[e_1,x]=e_1+(2-n)e_{n-1},\ [e_2,x]=(4-n)e_2+(2-n)e_n,\ [e_i,x]=(i+2-n)e_i,\ 3\leq i\leq n-2,$\\[1mm]
&$[e_{n-1},x]=(3-n)e_{n-1},\ [e_n,x]=2e_n,\ [x,e_1]=-e_1+e_{n-1},\ [x,x]=e_{n-2},$ \\[1mm]
\hline
$R_{n+1}^6(1,0,0)$&$[e_1,x]=\sum\limits_{t=2}^{n-2}a_te_t+e_{n-1},\ [e_2,x]=e_2+\sum\limits_{t=3}^{n-2}a_{t-1}e_t+e_n,\ [e_i,x]=e_i+\sum\limits_{t=i+1}^{n-2}a_{t-i+1}e_t,\  3\leq i\leq n-2,$\\[1mm]
&$[e_{n-1},x]=\sum\limits_{t=2}^{n-4}a_te_t+b_{n-3}e_{n-3}+b_{n-2}e_{n-2}+e_{n-1},\ [e_n,x]=(b_{n-3}-a_{n-3})e_{n-2},\ [x,x]=\alpha_ne_n,$\\[1mm]
\hline
$R_{n+2}(1,0,0)$&$[e_1,x]=e_1-e_{n-1},\ [e_2,x]=e_2-e_n,\ [e_i,x]=(i-1)e_i,\ 3\leq i\leq n-2,\ [e_n,x]=2e_n,\ [x,e_1]=-e_1+e_{n-1},$\\[1mm]
&$[e_1,y]=e_{n-1},\ [e_2,y]=e_2+e_n,\ [e_i,y]=e_i,\ 3\leq i\leq n-2,\ [e_{n-1},y]=e_{n-1},$ \\[1mm]
\hline
$H_{n+1}^1(0,0,0)$ &$[e_1, x] = e_1,\ [e_2, x] = 2e_2,\ [e_i, x] = (i-n)e_i,\ [x, x] = e_n,\ [x, e_1] = -e_1,\ [x, e_i] = (n-i)e_i,\ 3\leq i\leq n-1,$\\[1mm]
\hline
$H_{n+1}^2(0,0,0)$ &$[e_1,x] = ae_2,\ [e_i, x] = e_i + \epsilon e_{i+2} +\sum\limits_{k=i+3}^n
d_{k-i-2}e_k,\ [x, x] = ce_2,$\\[1mm]
&$[x,e_1] = be_2,\ [x, e_i] = -e_i - \epsilon e_{i+2} -\sum\limits_{k=i+3}^n
d_{k-i-2}e_k,\ \epsilon = 0,\pm1,\ 3\leq i\leq n,$\\[1mm]
\hline
$H_{n+1}^3(0,0,0)$ &$[e_1, x]=e_1,\ [e_2, x]=2e_2,\ [e_i, x]=(i-3+a)e_i,\ [x, e_1]=-e_1,\ [x, e_i]=(3-i-a)e_i,\ 3\leq i\leq n,$\\[1mm]
\hline
$H_{n+1}^4(0,0,0)$ &$[e_1, x]=e_1+e_3,\ [e_2, x]=2e_2,\ [e_i, x] = (i-2)e_i,\ [x, e_1]=-e_1-e_3,\ [x, e_i]=(2-i)e_i,\ 3\leq i\leq n,$\\[1mm]
\hline
$H_{n+2}(0,0,0)$ &$[e_1,x]=e_1,\ [e_2,x]=2e_2,\ [e_i,x]=(i-3)e_i,\ [x,e_1]=-e_1,$\\[1mm]
&$[x,e_i]=-(i-3)e_{i},\ [e_i,y]=e_i,\ [y,e_i]=-e_{i},\ 3\leq i\leq n,$\\[1mm]
\hline
$H_{n+1}^1(0,1,0)$&$[e_1,x]=e_1+(a-2)e_3,\ [e_2,x]=ae_2,\ [e_{i},x]=(a+i-4)e_{i},\ 3\leq i\leq n,$\\[1mm]
&$[x,e_1]=-e_1-(a-2)e_3,\ [x,e_3]=(1-a)e_3,\ [x,e_4]=e_2-ae_4,\ [x,e_j]=(4-j-a)e_j,\ 5\leq j\leq n,$\\[1mm]
\hline
$H_{n+1}^2(0,1,0)$&$[e_1,x]=e_1-2e_3+\delta e_5,\ [e_{i},x]=(i-4)e_{i},\ 3\leq i\leq n$,\\[1mm]
&$[x,e_1]=-e_1+2e_3-\delta e_5,\ [x,e_3]=e_3,\ [x,e_4]=e_2,\ [x,e_j]=(4-j)e_j,\ 5\leq j\leq n,\ \delta=\pm1,$\\[1mm]
\hline
$H_{n+1}^3(0,1,0)$&$[e_1,x]=e_1+(2-n)e_3,\ [e_2,x]=(4-n)e_2,\ [e_{i},x]=(i-n)e_{i},\ 3\leq i\leq n-1,\ [x,x]=\delta e_n,\ \delta=\pm1,$\\[1mm]
&$[x,e_1]=-e_1+(n-2)e_3,\ [x,e_3]=(n-3)e_3,\ [x,e_4]=e_2+(n-4)e_4,\ [x,e_j]=(n-j)e_j,\ 5\leq j\leq n-1,$\\[1mm]
\hline
$H_{n+1}^4(0,1,0)$&$[e_1,x]=e_3,\ [e_2,x]=e_2,\ [e_{i},x]=e_{i}+\epsilon e_{i+2}+\sum\limits_{k=i+3}^n{b_{k-i-2}e_k},\ \epsilon=0,\pm1,$\\[1mm]
&$[x,e_1]=-e_3,\ [x,e_i]=-e_i-\epsilon e_{i+2}-\sum\limits_{k=i+3}^n{b_{k-i-2}e_k},\ 3\leq i\leq n,$\\[1mm]
\hline
$H_{n+2}(0,1,0)$ &$[e_1,x]=e_1-e_3,\ [e_2,x]=e_2,\ [e_i,x]=(i-3)e_i,\ 4\leq i\leq n,$\\[1mm]
&$[x,e_1]=-e_1+e_3,\ [x,e_4]=-e_4+e_2,\ [x,e_i]=-(i-3)e_{i},\ 5\leq i\leq n,$\\[1mm]
& $[e_1,y]=e_3,\ [e_i,y]=e_i,\ 2\leq i\leq n,\ [y,e_1]=-e_3,\ [y,e_i]=-e_{i},\ 3\leq i\leq n,$\\[1mm]
\hline
$H_{n+1}(0,0,1)$&$[e_1,x]=e_1,\ [e_2,x]=2e_2,\ [e_i,x]=(i-2)e_i,\ [x,e_1]=-e_1,\ [x,e_i]=-(i-2)e_i,\ 3\leq i\leq n,$\\[1mm]
\hline
$H_{n+1}^1(0,2,1)$&$[e_1,x]=e_1+(a-1)e_3,\ [e_2,x]=2ae_2,\ [e_3,x]=ae_3,\ [e_4,x]=(a-1)e_2+(a+1)e_4,\ [e_{i},x]=(a+i-3)e_{i},$\\[1mm]
&$[x,e_1]=-e_1-(a-1)e_3,\ [x,e_3]=-ae_3,\ [x,e_4]=(a+1)(e_2-e_4),\ [x,e_i]=(3-i-a)e_i,\ 5\leq i\leq n,$\\[1mm]
\hline
$H_{n+1}^2(0,2,1)$&$[e_1,x]=e_1+(2-n)e_3,\ [e_2,x]=2(3-n)e_2,\ [e_3,x]=(3-n)e_3,\ [e_4,x]=(2-n)e_2+(4-n)e_4,\ [e_{i},x]=(i-n)e_{i},$\\[1mm]
&$[x,x]=e_n,\ [x,e_1]=-e_1+(n-2)e_3,\ [x,e_3]=(n-3)e_3,\ [x,e_4]=(4-n)(e_2-e_4),\ [x,e_i]=(n-i)e_i,\ 5\leq i\leq n,$\\
\hline
$H_{n+1}^3(0,2,1)$&$[e_1,x]=e_3,\ [e_2,x]=2e_2,\ [e_4,x]=e_2,\ [e_{i},x]=e_{i}+\epsilon e_{i+2}+\sum\limits_{k=i+3}^n{b_{k-i-2}e_k},$\\[1mm]
&$[x,e_1]=-e_3,\ [x,e_4]=e_2,\ [x,e_i]=-e_i-\epsilon e_{i+2}-\sum\limits_{k=i+3}^n{b_{k-i-2}e_k},\ 3\leq i\leq n,\ \epsilon=0,1,$\\[1mm]
\hline
$H_{n+1}^4(0,2,1)$&$[e_1,x]=e_1-e_3,\ [e_3,x]=de_2,\ [e_4,x]=e_4-e_2,\ [e_{i},x]=(i-3)e_{i},\ [x,x]=\epsilon e_2,$\\[1mm]
&$[x,e_1]=-e_1+(d+f)e_2+e_3,\ [x,e_3]=fe_2,\ [x,e_4]=e_2-e_4,\ [x,e_i]=(3-i)e_i,\ 5\leq i\leq n,$\\[1mm]
&$\epsilon=0,1,$\ if\,\,$\epsilon=0,$\,\,then\,\,$(d, f)\neq(0, 0),$\\[1mm]
\hline
$H_{n+2}^3(0,2,1)$&$[e_1,x]=e_1-e_3,\ [e_4,x]=e_4-e_2,\ [e_i,x]=(i-3)e_i,\ [x,e_1]=-e_1+e_3,\ [x,e_4]=-e_4+e_2,\ [x,e_i]=-(i-3)e_{i},$\\[1mm]
&$[e_1,y]=e_3,\ [e_2,y]=2e_2,\ [e_3,y]=e_3,\ [e_4,y]=e_4+e_2,\ [e_i,y]=e_i,$\\[1mm]
&$[y,e_1]=-e_3,\ [y,e_3]=-e_3,\ [y,e_4]=-e_4+e_2,\ [y,e_i]=-e_{i},\ 5\leq i\leq n,$\\[1mm]
  \hline
$H_{n+1}^1(1,0,0),$&$[e_1,x]=e_1,\ [e_2,x]=2e_2,\  [e_i,x]=(i-3+b)e_i,\  [e_n,x]=(n-4+2b)e_n,$\\[1mm]
$n$ odd&$[x,e_1]=-e_1,\ [x,e_i]=(3-i-b)e_{i},\ [x,e_n]=(4-n-2b)e_{n},\ 3\leq i\leq n-1,$\\[1mm]
  \hline
$H_{n+1}^2(1,0,0),$&$[e_1,x]=e_1+e_n,\ [e_2,x]=2e_2,\ [e_i,x]=\frac{2i-1-n}{2}e_i,\ [e_n,x]=e_n,$ \\[1mm]
$n$ odd&$[x,e_1]=-e_1-e_n,\ [x,e_i]=\frac{n+1-2i}{2}e_{i},\ 3\leq i\leq n-1,$\\[1mm]
  \hline
$H_{n+1}^3(1,0,0),$&$[e_1,x]=e_1,\ [e_2,x]=2e_2,\  [e_i,x]=\frac{2i-2-n}{2}e_i,\ [x,x]=e_n,$\\[1mm]
$n$ odd&$[x,e_1]=-e_1,\ [x,e_i]=\frac{n+2-2i}{2}e_{i},\ 3\leq i\leq n-1,$\\[1mm]
  \hline
$H_{n+1}^4(1,0,0),$ &$[e_1,x]=ae_2,\ [e_i, x]=e_i+\sum\limits_{k=i+1}^{n-1}
c_{k-i}e_{2k-i+1},\ [e_n, x] = 2e_n,\ [x,x] =\epsilon e_2,$\\[1mm]
$n$ odd &$[x,e_1] = be_2,\ [x, e_i] =-e_i-\sum\limits_{k=i+1}^{n-1}
c_{k-i}e_{2k-i+1},\ [x,e_n] = -2e_n,\ \epsilon = 0,1,\ 3\leq i\leq n-1,\ 2k-i+1\leq n,$\\[1mm]
\hline
$H_{n+2}^4(1,0,0),$&$[e_1,x]=e_1-e_3,\ [e_2,x]=2e_2,\ [e_i,x]=(i-3)e_i,\ 4\leq i\leq n,$ \\[1mm]
$n$ odd&$[x,e_1]=-e_1+e_3,\ [x,e_i]=-(i-3)e_{i},\ 4\leq i\leq n,\ [e_1,y]=e_3,\ [e_i,y]=e_i,\ [e_n,y]=2e_n,\ 3\leq i\leq n-1,$ \\[1mm]
&$[y,e_1]=-e_3,\ [y,e_i]=-e_{i},\ [y,e_n]=-2e_n,\ 3\leq i\leq n-1,$\\[1mm]
  \hline
$H_{n+1}^1(1,1,0)$,&$[e_1,x]=e_1+ae_3,\ [e_2,x]=(2+a)e_2,\ [e_i,x]=(i-2+a)e_i,\ 3\leq i\leq n-1,\ [e_n,x]=(n-2+2a)e_{n},$\\[1mm]
$n$ odd&$[x,e_1]=-e_1-ae_3,\ [x,e_{3}]=-(1+a)e_3,\ [x,e_4]=e_2-(2+a)e_4,$\\ [1mm]
&$[x,e_i]=-(i-2+a)e_i,\ [x,e_n]=-(n-2+2a)e_{n},\ 5\leq i\leq n-1,$\\[1mm]
\hline
$H_{n+1}^2(1,1,0)$,&$[e_1,x]=e_1-2e_3,\ [e_i,x]=(i-4)e_i,\ [e_n,x]=(n-6)e_{n},\ 3\leq i\leq n-1,\ [x,e_1]=-e_1+2e_3,\ [x,e_{3}]=e_3,$\\[1mm]
$n$ odd&$[x,e_4]=e_2,\ [x,e_i]=-(i-4)e_i,\ [x,e_n]=-(n-6)e_{n},\ [x,x]=e_2,\ 5\leq i\leq n-1,$\\[1mm]
\hline
$H_{n+1}^3(1,1,0),$&$[e_1,x]=e_1+(3-n)e_3,\ [e_2,x]=(5-n)e_2,\ [e_3,x]=(4-n)e_3+e_n,\ [e_i,x]=(i+1-n)e_i,\ 4\leq i\leq n-1,$\\[1mm]
$n$ odd&$[e_n,x]=(4-n)e_{n},\ [x,e_1]=-e_1-(3-n)e_3,\ [x,e_{3}]=-(4-n)e_3-e_n,$\\[1mm]
&$[x,e_4]=e_2-(5-n)e_4,\ [x,e_i]=-(i+1-n)e_i,\ [x,e_n]=-(4-n)e_{n},\ 5\leq i\leq n-1,$\\[1mm]
\hline
$H_{n+1}^4(1,1,0),$&$[e_1,x]=e_1+(1-\frac{n}{2})e_3,\ [e_2,x]=(3-\frac{n}{2})e_2,\ [e_i,x]=(i-1-\frac{n}{2})e_i,\ 3\leq i\leq n-1,\ [x,x]=e_n,$\\[1mm]
$n$ odd&$[x,e_1]=-e_1-(1-\frac{n}{2})e_3,\ [x,e_{3}]=-(2-\frac{n}{2})e_3,\ [x,e_4]=e_2-(3-\frac{n}{2})e_4,\ [x,e_i]=-(i-1-\frac{n}{2})e_i,\ 5\leq i\leq n-1,$\\[1mm]
  \hline
$H_{n+1}^5(1,1,0)$,&$[e_1,x]=e_1+\frac{3-n}{2}e_3,\ [e_2,x]=(2+\frac{3-n}{2})e_2,\ [e_3,x]=(1+\frac{3-n}{2})e_3+e_n,\ [e_i,x]=(i-2+\frac{3-n}{2})e_i,\ 4\leq i\leq n-1,$\\[1mm]
$n$ odd&$[e_n,x]=e_{n},\ [x,e_1]=-e_1-\frac{3-n}{2}e_3,\ [x,e_{3}]=-(1+\frac{3-n}{2})e_3-e_n,\ [x,e_4]=e_2-(2+\frac{3-n}{2})e_4,$\\[1mm]
&$[x,e_i]=-(i-2+\frac{3-n}{2})e_i,\ 5\leq i\leq n-1,\ [x,e_n]=-e_{n},$\\[1mm]
\hline
$H_{n+1}^6(1,1,0)$,&$[e_1,x]=e_3,\ [e_2,x]=e_2,\ [e_i,x]=e_i+\sum\limits_{t=i+1}^{n}b_{t-i+3}e_t,\ 3\leq i\leq n-1,\ [e_n,x]=2e_{n},$\\[1mm]
$n$ odd&$[x,e_1]=-e_3,\ [x,e_i]=-e_i-\sum\limits_{t=i+1}^{n}b_{t-i+3}e_t,\  3\leq i\leq n-1,\ [x,e_n]=-2e_{n},\ b_{2k+1}=0,\ 2\leq k\leq \frac{n-3}{2},$\\[1mm]
\hline
$H_{n+2}(1,1,0)$,&$ [e_1,x]=e_1-e_3,\ [e_2,x]=e_2,\ [e_i,x]=(i-3)e_i,\ 4\leq i\leq n-1,\ [e_n,x]=(n-4)e_n,\  [x,e_1]=-e_1+e_3,$\\[1mm]
$n$ odd&$ [x,e_4]=-e_4+e_2,\ [x,e_i]=-(i-3)e_{i},\ 5\leq i\leq n-1,\ [x,e_n]=-(n-4)e_{n},\ [e_1,y]=e_3, $\\[1mm]
&$[e_i,y]=e_i,\ [e_n,y]=2e_n,\ 2\leq i\leq n-1,\ [y,e_1]=-e_3,\ [y,e_i]=-e_{i},\ [y,e_n]=-2e_n,\ 3\leq i\leq n-1,$\\[1mm]
  \hline
$H_{n+1}(1,2,0),$& $[e_1,x]=e_1,\ [e_2,x]=2e_2,\ [e_i,x]=(i-2)e_i,\ 3\leq i\leq n,$\\[1mm]
$n$ odd&$[x,e_1]=-e_1,\ [x,e_3]=-e_3,\ [x,e_4]=-2e_4+2e_2,\ [x,e_i]=-(i-2)e_i,\ 5\leq i\leq n,$\\[1mm]
  \hline
$H_{n+1}(1,0,\gamma)$&$[e_1,x]=e_1,\ [e_2,x]=2e_2,\ [e_i,x]=(i-2)e_i,\ [x,e_1]=-e_1,\ [x,e_i]=-(i-2)e_i,\ 3\leq i\leq n,\ \gamma\neq0,$\\[1mm]
$n$ odd, &\\
\hline
$H_{n+1}(1,-2,1),$&$[e_1,x]=e_1,\ [e_2,x]=2e_2,\ [e_i,x]=(i-2)e_i,\ 3\leq i\leq n,$\\[1mm]
$n$ odd&$[x,e_1]=-e_1,\ [x,e_3]=-e_3,\ [x,e_4]=-2e_4-2e_2,\ [x,e_i]=-(i-2)e_i,\ 5\leq i\leq n,$\\[1mm]
\hline
$H_{n+1}^1(1,2,1)$,&$[e_1,x]=e_1+ae_3,\ [e_2,x]=2(1+a)e_2,\ [e_3,x]=(a+1)e_3,\ [e_4,x]=ae_2+(2+a)e_4,$\\[1mm]
$n$ odd&$[e_i,x]=(i-2+a)e_i,\ 5\leq i\leq n-1,\ [e_n,x]=(n-2+2a)e_{n},\ [x,e_1]=-e_1-ae_3,\ [x,e_{3}]=-(a+1)e_3,$\\[1mm]
&$[x,e_4]=(2+a)e_2-(2+a)e_4,\ [x,e_i]=-(i-2+a)e_i,\ [x,e_n]=-(n-2+2a)e_{n},\ 5\leq i\leq n-1,$\\[1mm]
  \hline
$H_{n+1}^2(1,2,1),$&$[e_1,x]=e_1-e_3,\ [e_3,x]=be_2,\ [e_4,x]=-e_2+e_4,\ [e_i,x]=(i-3)e_i,\ 5\leq i\leq n-1,\ [e_n,x]=(n-4)e_{n},$\\[1mm]
$n$ odd&$ [x,e_1]=-e_1+(b+d)e_2+e_3,\ [x,e_{3}]=de_2,\ [x,e_4]=e_2-e_4,\ [x,e_i]=-(i-3)e_i,\ 5\leq i\leq n-1,$\\
& $[x,e_n]=-(n-4)e_{n},\ [x,x]=\alpha e_2,\ (b,d,\alpha)\neq(0,0,0),$ \\[1mm]
  \hline
$H_{n+1}^3(1,2,1)$,&$[e_1,x]=e_1-\frac{1}{2}e_3,\ [e_2,x]=e_2,\ [e_3,x]=\frac{1}{2}e_3,\ [e_4,x]=-\frac{1}{2}e_2+\frac{3}{2}e_4,\ [e_i,x]=\frac{2i-5}{2}e_i,\ 5\leq i\leq n-1,$\\[1mm]
$n$ odd&$[e_n,x]=(n-3)e_{n},\ [x,e_1]=-e_1+e_2+\frac{1}{2}e_3,\ [x,e_{3}]=-\frac{1}{2}e_3,\ [x,e_4]=\frac{3}{2}e_2-\frac{3}{2}e_4,$\\[1mm]
&$[x,e_i]=-\frac{2i-5}{2}e_i,\ 5\leq i\leq n-1,\ [x,e_n]=-(n-3)e_{n},$\\[1mm]
  \hline
$H_{n+1}^4(1,2,1)$,&$[e_1,x]=e_1+(3-n)e_3,\ [e_2,x]=2(4-n)e_2,\ [e_3,x]=(4-n)e_3+e_n,\ [e_4,x]=(3-n)e_2+(5-n)e_4,$\\[1mm]
$n$ odd&$[e_i,x]=(i+1-n)e_i,\ 5\leq i\leq n-1,\ [e_n,x]=(4-n)e_{n},\ [x,e_1]=-e_1-(3-n)e_3,\ [x,e_{3}]=-(4-n)e_3-e_n,$\\[1mm]
& $[x,e_4]=(5-n)e_2-(5-n)e_4,\ [x,e_i]=-(i+1-n)e_i,\ [x,e_n]=-(4-n)e_{n},\ 5\leq i\leq n-1,$\\[1mm]
  \hline
$H_{n+1}^5(1,2,1),$&$ [e_1,x]=e_1+\frac{3-n}{2}e_3,\ [e_2,x]=(5-n)e_2,\ [e_3,x]=\frac{5-n}{2}e_3+e_n,\ [e_4,x]=\frac{3-n}{2}e_2+\frac{7-n}{2}e_4,$\\[1mm]
$n$ odd&$[e_i,x]=\frac{2i-1-n}{2}e_i,\ 5\leq i\leq n-1,\ [e_n,x]=e_{n},\ [x,e_1]=-e_1-\frac{3-n}{2}e_3,$\\[1mm]
&$  [x,e_{3}]=-\frac{5-n}{2}e_3-e_n,\ [x,e_4]=\frac{7-n}{2}e_2-\frac{7-n}{2}e_4,\ [x,e_i]=-\frac{2i-1-n}{2}e_i,\ [x,e_n]=-e_{n},\ 5\leq i\leq n-1,$\\[1mm]
  \hline
$H_{n+1}^6(1,2,1),$&$[e_1,x]=e_1+\frac{2-n}{2}e_3,\ [e_2,x]=(4-n)e_2,\ [e_3,x]=\frac{4-n}{2}e_3,\ [e_4,x]=\frac{2-n}{2}e_2+\frac{6-n}{2}e_4,$\\[1mm]
$n$ odd&$[e_i,x]=\frac{2i-2-n}{2}e_i,\ 5\leq i\leq n-1,\ [x,e_1]=-e_1-\frac{2-n}{2}e_3,\ [x,e_{3}]=-\frac{4-n}{2}e_3,$\\[1mm]
&$[x,e_4]=\frac{6-n}{2}e_2-\frac{6-n}{2}e_4,\ [x,e_i]=-\frac{2i-2-n}{2}e_i,\ 5\leq i\leq n-1,\ [x,x]=e_n,$\\[1mm]
  \hline
$H_{n+1}^7(1,2,1)$,&$[e_1,x]=e_3,\ [e_2,x]=2e_2,\ [e_3,x]=e_3+\sum\limits_{t=5}^{n-1}b_{t}e_t,\ [e_4,x]=e_2+e_4+\sum\limits_{t=6}^{n}b_{t-1}e_t,$\\[1mm]
$n$ odd&$[e_i,x]=e_i+\sum\limits_{t=i+2}^{n}b_{t-i+3}e_t,\ 5\leq i\leq n-1,\ [e_n,x]=2e_{n},\ [x,e_1]=-e_3,$\\[1mm]
&$[x,e_{3}]=-e_3-\sum\limits_{t=5}^{n-1}b_te_t, \ [x,e_4]=e_2-e_4-\sum\limits_{t=6}^{n}b_{t-1}e_t,\
[x,e_i]=-e_i-\sum\limits_{t=i+2}^{n}b_{t-i+3}e_t,\  5\leq i\leq n-1,$\\[1mm]
&$[x,e_n]=-2e_{n},\ b_{2k+1}=0,\ 2\leq k\leq \frac{n-3}{2},$ \\[1mm]
  \hline
$H_{n+2}(1,2,1)$&$[e_1,x]=e_1-e_3,\ [e_4,x]=e_4-e_2,\ [e_i,x]=(i-3)e_i,\ [e_n,x]=(n-4)e_n,$\\[1mm]
$n$ odd,&$[x,e_1]=-e_1+e_3,\ [x,e_4]=-e_4+e_2,\ [x,e_i]=-(i-3)e_{i},\ [x,e_n]=-(n-4)e_{n},$\\[1mm]
&$[e_1,y]=e_3,\ [e_2,y]=2e_2,\ [e_3,y]=e_3,\ [e_4,y]=e_4+e_2,\ [e_i,y]=e_i,\ [e_n,y]=2e_n,$\\[1mm]
&$[y,e_1]=-e_3,\ [y,e_3]=-e_3,\ [y,e_4]=-e_4+e_2,\ [y,e_i]=-e_{i},\ [y,e_n]=-2e_n,\ 5\leq i\leq n-1,$\\[1mm]
  \hline
$H_{n+1}(1,4,2),$&$[e_1,x]=e_1,\ [e_2,x]=2e_2,\ [e_i,x]=(i-2)e_i,\ 3\leq i\leq n,$\\[1mm]
$n$ odd &$[x,e_1]=-e_1,\ [x,e_3]=-e_3,\ [x,e_4]=-2e_4+4e_2,\ [x,e_i]=-(i-2)e_i,\ 5\leq i\leq n,$\\[1mm]
\hline
$F_{n+1}(\mu_1)$&$[e_i,x]=\sum\limits_{j=i+1}^{n-2}a_{j-i+1}e_j, \ 1\leq i\leq n-2,\ [e_{n-1},x]=e_{n-1},\ [e_{n},x]=e_{n},\ [x,e_{n-1}]=-e_{n-1},\ [x,x]=\delta_{n-2}e_{n-2},$\\[1mm]
\hline
$F_{n+1}^1(\mu_2)$&$[e_1,x]=e_{n-1},\ [e_2,x]=e_2+e_{n},\ [e_i,x]=(i-1)e_i, \ 3\leq i\leq n-2,$\\[1mm]
&$[e_{n-1},x]=e_{n-1},\ [x,e_1]=-e_{n-1},\ [x,e_{n-1}]=-e_{n-1},$\\[1mm]
\hline
$F_{n+1}^2(\mu_2)$&$[e_1,x]=e_{n-1},\ [e_2,x]=e_2+e_{n},\ [e_i,x]=(i-1)e_i, \ 3\leq i\leq n-2,$\\[1mm]
&$[e_{n-1},x]=e_{n-1},\ [x,e_1]=-e_{n-1},\ [x,e_{n-1}]=-e_{n-1},\ [x,x]=e_{n},$\\[1mm]
\hline
$F_{n+1}^3(\mu_2)$&$[e_1,x]=e_{n-1},\ [e_2,x]=e_2+e_{n},\ [e_i,x]=(i-1)e_i, \ 3\leq i\leq n-2,$\\[1mm]
&$[e_{n-1},x]=e_{n-1},\ [x,e_1]=-e_{n-1}+e_{n},\ [x,e_{n-1}]=-e_{n-1},\ [x,x]=\gamma e_{n},$\\[1mm]
\hline
$F_{n+1}^4(\mu_2)$&$[e_1,x]=e_{n-1}+e_{n},\ [e_2,x]=e_2+e_{n},\ [e_i,x]=(i-1)e_i, \ 3\leq i\leq n-2,$\\[1mm]
&$[e_{n-1},x]=e_{n-1},\ [x,e_1]=-e_{n-1}+\beta e_{n},\ [x,e_{n-1}]=-e_{n-1},\ [x,x]=\gamma e_{n}.$\\[1mm]
\hline
\end{longtable}
}


\begin{thebibliography}{99}

\bibitem{Abdurasulov0}   Abdurasulov, K.K., Adashev J.K. (2021). {\it Maximal solvable Leibniz algebras whose nilradical is a quasi-filiform algebra.} Submitted to journal. arXiv:2105.13141.

\bibitem{Abdurasulov1}   Abdurasulov, K.K., Adashev J.K., Casas, J.M., Omirov, B.A. (2019). {\it Solvable Leibniz algebras whose nilradical is a quasi-filiform Leibniz algebra of maximum length.} Comm. Algebra 47(4):1578--1594. DOI: 10.1080/00927870903236160.

\bibitem{Abdurasulov} Abdurasulov K.K., Adashev J.K., Sattarov A.M. (2016). {\it Solvable Leibniz algebras with 2-filiform nilradical.} Uzbek Math. J. 4:16--23.

\bibitem{Ancochea1} Ancochea, J.~M., Campoamor-Stursberg, R., Garc{\'{\i}}a, L.  (2011). {\it Classification of Lie algebras with naturally graded quasi-filiform nilradicals.} Journal of Geometry and Physics 61:2168--2186. DOI:10.1016/j.geomphys.2011.06.015.

\bibitem{Bar} Barnes, D.W. (2012). {\it On Levi's theorem for Leibniz algebras.} Bull. Aust. Math. Soc. 86(2):184--185. DOI:10.1017/S0004972711002954.


\bibitem{Bloh} Bloh, A. (1965). {\it On a generalization of the concept of Lie algebra.} Dokl. Akad. Nauk SSSR. 165(3):471--473.

\bibitem{Camacho} Camacho, L.M., Ca\~{n}ete, E.M., G\'{o}mez, J.R., Omirov B.A. (2011). {\it Quasi-filiform Leibniz algebras of maximum length.} Sib. Math. J. 52(5):840--853. DOI. 10.1134/S0037446611050090.

\bibitem{Camacho2} Camacho, L. M.,  G\'{o}mez, J. R., Gonz\'{a}lez, A. J., Omirov, B. A. (2009).{\it Naturally graded quasi-filiform
Leibniz algebras.} J. Symbolic Comput. 44(5):527--539. DOI: 10.1016/j.jsc.2008.01.006.

\bibitem{Camacho3} Camacho, L.M., G\'{o}mez, J.R., Gonz\'{a}lez, A.J., Omirov, B.A. (2010). {\it Naturally graded 2-filiform Leibniz algebras.} Comm. Algebra 38(10):3671--3685. DOI: 10.1080/00927870903236160.

\bibitem{cartan} Cartan, E. (1894).  {\it Sur la structure des groups de transformations finis et continus} (Paris: thesis, Nony), in: Oeuvres Completes, Partie I. Tome 1:137--287.


\bibitem{Nulfilrad} Casas, J.M., Ladra, M., Omirov, B.A., Karimjanov, I.A. (2013). {\it Classification of solvable Leibniz algebras with null-filiform nilradical.} Linear Mult. Alg. 61(6):758--774. DOI: 10.1080/03081087.2012.703194.

\bibitem{gar} Garc{\'{\i}}a, L. (2010). {\it \'{E}tude g\'{e}om\'{e}trique et structures diff\'{e}rentielles g\'{e}n\'{e}ralisees sur les alg\`{e}bres de Lie quasifiliformes complexes et r\'{e}elles.} Tesis doctoral, Universidad complutense de Madrid. 149 p.

\bibitem{gomez} G\'{o}mez, J.R., Jim\'{e}nez-Merch\'{a}n, A. (2002). {\it Naturally graded quasi-filiform Lie algebras.} J. Algebra 256(1):221--228. DOI:10.1016/S0021-8693(02)00130-8.

\bibitem{Abror2} Khudoyberdiyev, A.Kh., Ladra, M., Omirov, B.A. (2014). {\it On solvable Leibniz algebras whose nilradical is a direct
sum of null-filiform algebras.} Linear Mult. Alg. 62(9):1220--1239. DOI: 10.1080/03081087.2013.816305.

\bibitem{Loday} Loday, J.-L. (1993).  {\it Une version non commutative des alg\`ebres de Lie: les alg\`ebres de Leibniz.} Enseign. Math. (2), 39(3-4):269--293.

\bibitem{Mub} Mubarakzjanov, G. M. (1963). {\it On solvable Lie algebras.} (Russian), Izvestiya Vysshikh Uchebnykh Zavedenii.
Matematika. 1:114--123.

\bibitem{NdWi} Ndogmo, J. C., Winternitz, P. (1994). {\it Solvable Lie algebras with abelian nilradicals.} J. Phys. A: Math. Gen.
27(2):405--423. DOI: 10.1088/0305-4470/27/2/024.

\bibitem{Rubin} Rubin, J. L., Winternitz, P. (1993). {\it Solvable Lie algebras with Heisenberg ideals.} J. Phys. A 26(5):1123--1138. DOI: 10.1088/0305-4470/26/5/031.

\bibitem{Shab} Shabanskaya, A. (2017). {\it Solvable extensions of naturally graded quasi-filiform Leibniz algebras of second type $\mathcal{L}^1$ and $\mathcal{L}^3$}. Comm. Algebra 45(10):4492--4520. DOI: 10.1080/00927872.2016.1270294.

\bibitem{Shab22} Shabanskaya, A. (2018). {\it Solvable extensions of the naturally graded quasi-filiform Leibniz algebra of second type $\mathcal{L}^2$.} Comm. Algebra 46(11):5006--5031. DOI: 10.1080/00927872.2018.1459653.

\bibitem{Shab4} Shabanskaya, A. (2020). {\it Solvable extensions of the naturally graded quasi-filiform Leibniz algebra of second type $\mathcal{L}^4$.} Comm. Algebra 48(2):490--507. DOI: 10.1080/00927872.2019.1648652.

\bibitem{Shab5} Shabanskaya, A. (2020). {\it Solvable extensions of the naturally graded quasi-filiform Leibniz algebra of second type $\mathcal{L}^5$.} Comm. Algebra. DOI: 10.1080/00927872.2020.1834574.

\bibitem{WaLiDe} Wang, Y., Lin, J., Deng Sh. (2008). {\it Solvable Lie algebras with quasi-filiform nilradicals.} Comm. Algebra 36(11):4052--4067. DOI: 10.1080/00927870802174629.

\end{thebibliography}
\end{document}